\newtheorem{theorem}{Theorem}[section]
\newtheorem{lemma}[theorem]{Lemma}
\newtheorem{proposition}[theorem]{Proposition}
\begin{document}
\title{Some sum-product estimates in matrix rings over finite fields}

\author{Chengfei Xie$^{\text{a}}$ and Gennian Ge$^{\text{a,}}$\thanks{
  Corresponding author. Email address: gnge@zju.edu.cn. Research supported by the National Natural Science Foundation of
China under Grant No. 11971325, National Key Research and Development Program of China under Grant Nos. 2020YFA0712100 and 2018YFA0704703,
and Beijing Scholars Program.}\\
\footnotesize $^{\text{a}}$ School of Mathematical Sciences, Capital Normal University, Beijing, 100048, China}

\maketitle

\begin{abstract}
We study some sum-product problems over matrix rings. Firstly, for $A, B, C\subseteq M_n(\mathbb{F}_q)$, we have
$$
|A+BC|\gtrsim q^{n^2},
$$
whenever $|A||B||C|\gtrsim q^{3n^2-\frac{n+1}{2}}$. Secondly, if a set $A$ in $M_n(\mathbb{F}_q)$ satisfies $|A|\geq C(n)q^{n^2-1}$ for some sufficiently large $C(n)$, then we have
$$
\max\{|A+A|, |AA|\}\gtrsim \min\left\{\frac{|A|^2}{q^{n^2-\frac{n+1}{4}}}, q^{n^2/3}|A|^{2/3}\right\}.
$$
These improve the results due to The and Vinh (2020), and generalize the results due to Mohammadi, Pham, and Wang (2021). We also give a new proof for a recent result due to The and Vinh (2020). Our method is based on spectral graph theory and linear algebra.
\smallskip
\end{abstract}
\medskip

\noindent {{\it Keywords\/}: sum-product estimates, spectral graph theory, finite field}

\smallskip

\noindent {{\it AMS subject classifications\/}: 11B75, 68R05}

\section{Introduction}
Let $\mathbb{F}_q$ be a field with $q$ elements. Let $M_n(\mathbb{F}_q)$ be the ring of all $n\times n$ matrices over $\mathbb{F}_q$, $Z_n(\mathbb{F}_q)$ be the set of $n\times n$ matrices over $\mathbb{F}_q$ with zero determinant, and $GL_n(\mathbb{F}_q)$ be the set of $n\times n$ invertible matrices over $\mathbb{F}_q$. Throughout the paper, we write $X\lesssim Y$ if there exists a constant $C(n)$ (maybe dependent on $n$, but independent with $q$) such that $X\leq C(n)Y$, and write $X\sim Y$ if $X\lesssim Y$ and $Y\lesssim X$. For $A, B\subseteq M_n(\mathbb{F}_q)$, we define $A+B=\{a+b:a\in A, b\in B \}$, $AB=\{ab:a\in A, b\in B\}$, $-A=\{-a:a\in A\}$. If $A\subseteq GL_n(\mathbb{F}_q)$, then we write $A^{-1}=\{a^{-1}:a\in A\}$. Moreover, let $I_n$ be the $n\times n$ identity matrix.

In an arbitrary ring $R$, the sum-product problem asks the lower bound of $\max\{|A+A|, |AA|\}$ for $A\subseteq R$ under some conditions. In \cite{MR820223}, Erd\H{o}s and Szemer\'{e}di proved that there exists a constant $\epsilon$ such that
$$
\max\{|A+A|, |AA|\}\gtrsim|A|^{1+\epsilon},
$$
for any finite set $A\subseteq\mathbb{Z}$. They also conjectured that this bound holds for any $\epsilon<1$ and any sufficiently large $A$.

In \cite{MR1472816}, Elekes introduced a geometric approach (namely, the Szemer\'{e}di-Trotter theorem) for the sum-product problem, and obtained that
$$
\max\{|A+A|, |AA|\}\gtrsim|A|^{5/4},
$$
for any finite set $A\subseteq\mathbb{R}$. It shows the relationship between the sum-product problem and incidence geometry. The best known bound in this direction is due to Shakan \cite{MR4015652}, which states that
$$
\max\{|A+A|, |AA|\}\gtrsim|A|^{\frac{4}{3}+\frac{5}{5277}}.
$$

In the setting of finite fields, surprising results are obtained when $A\subseteq\mathbb{F}_q$ is large with respect to $q$. In particular, when $A=\mathbb{F}_q$, then $|A+A|=|AA|=|A|=q$. So one generally works either on the bound of $\max\{|A+A|, |AA|\}$ when $A$ is small in terms of the characteristic $p$ of $\mathbb{F}_q$ or on the lower the size of $|A|$ to guarantee that $\max\{|A+A|, |AA|\}$ is large in terms of $q$. Bourgain, Katz, and Tao \cite{MR2053599} showed that, given $A\subseteq\mathbb{F}_p$ with $p$ prime and $p^\delta<|A|<^{1-\delta}$ for some $\delta>0$, one has
$$
\max\{|A+A|, |AA|\}\geq C_{\delta}|A|^{1+\epsilon},
$$
for some $\epsilon=\epsilon(\delta)$. Notably, Roche-Newton, Rudnev, and Shkredov \cite{MR3474329} showed that
$$
\max\{|A+A|, |AA|\}\gtrsim|A|^{6/5},
$$
for $A\subseteq\mathbb{F}_q$ with characteristic $p$ and $|A|<p^{5/8}$. Rudnev, Shakan, and Shkredov \cite{MR4069186} improved the exponent to $11/9$ for $A\subseteq\mathbb{F}_p^*$ and $|A|<p^{36/67}$. Most recently, Mohammadi and Stevens \cite{2021arXiv210308252M} improved the exponent from $11/9$ to $5/4$ for $A\subseteq\mathbb{F}_p$ and $|A|\lesssim p^{1/2}$.

In matrix rings, Karabulut, Koh, Pham, Shen, and Vinh \cite{MR3975670} proved the following result.
\begin{theorem}[\cite{MR3975670}]
If $A\subseteq M_2(\mathbb{F}_q)$ with $|A|\geq Cq^{3}$ for some constant $C$, then we have
$$
\max\{|A+A|, |AA|\}\gtrsim \min\left\{\frac{|A|^2}{q^{7/2}}, q^{2}|A|^{1/2}\right\}.
$$
\end{theorem}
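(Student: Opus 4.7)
The plan is to prove a Szemer\'edi--Trotter type point--line incidence estimate over the ``affine plane'' $M_2(\mathbb{F}_q)^2$ via spectral graph theory, and then apply Elekes' classical sum--product argument. For $m \in GL_2(\mathbb{F}_q)$ and $b \in M_2(\mathbb{F}_q)$, define the ``line''
\[
\ell_{m,b} = \{(x, y) \in M_2(\mathbb{F}_q)^2 : y = m(x - b)\},
\]
which has exactly $q^4$ points; write $\mathcal{L}$ for the family of all such lines. The first goal is to show that for any $P \subseteq M_2(\mathbb{F}_q)^2$ and any $L \subseteq \mathcal{L}$,
\[
\left| I(P, L) - \frac{|P||L|}{q^4} \right| \lesssim q^{7/2} \sqrt{|P||L|}.
\]
I would obtain this via the expander mixing lemma applied to the biregular bipartite graph with vertex classes $M_2(\mathbb{F}_q)^2$ and $GL_2(\mathbb{F}_q) \times M_2(\mathbb{F}_q)$, where $(x, y) \sim (m, b)$ iff $(x, y) \in \ell_{m, b}$. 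Each vertex on the point side has degree $|GL_2(\mathbb{F}_q)| \sim q^4$, and the nontrivial singular values of the biadjacency operator are estimated by Fourier analysis on the additive group $M_2(\mathbb{F}_q)$ combined with a matrix Kloosterman-type character sum bound on $GL_2(\mathbb{F}_q)$, yielding a second singular value $\lambda \lesssim q^{7/2}$.

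With the incidence bound in hand, I would mimic Elekes' classical argument in the matrix setting. Since $|Z_2(\mathbb{F}_q)| \sim q^3$ and $|A| \geq Cq^3$ with $C$ sufficiently large, $|A \cap GL_2(\mathbb{F}_q)| \geq |A|/2$. Take
\[
P = (A + A) \times AA, \qquad L = \{\ell_{m, b'} : m \in A \cap GL_2(\mathbb{F}_q),\ b' \in A\},
\]
so $|P| \leq |A+A||AA|$ and $|L| \lesssim |A|^2$. For each triple $(m, a, b') \in (A \cap GL_2(\mathbb{F}_q)) \times A \times A$, the point $(a + b', ma) \in P$ lies on the line $\ell_{m, b'} \in L$ by direct substitution, and the triple is recoverable from the resulting (point, line) pair, so $I(P, L) \gtrsim |A|^3$. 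Inserting this lower bound into the incidence estimate and dividing by $|A|$ yields
\[
|A|^2 \lesssim \frac{|A+A||AA||A|}{q^4} + q^{7/2}\sqrt{|A+A||AA|},
\]
and a dichotomy on which term dominates gives $|A+A||AA| \gtrsim \min\{q^4|A|,\ |A|^4/q^7\}$, hence $\max\{|A+A|, |AA|\} \gtrsim \min\{q^2|A|^{1/2},\ |A|^2/q^{7/2}\}$, as required.

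The principal technical hurdle is the spectral bound $\lambda \lesssim q^{7/2}$ in the incidence estimate, which demands a uniform estimate for a matrix Kloosterman-type character sum over $GL_2(\mathbb{F}_q)$; extracting the sharp exponent $7/2$ and treating the singular locus $Z_2(\mathbb{F}_q)$ carefully are where the substantive work lies. Once the spectral estimate is in place, the rest is a faithful transcription of Elekes' scheme to the noncommutative matrix setting.
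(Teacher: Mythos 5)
The paper does not prove this statement itself; it cites it as Theorem~1.1 from Karabulut, Koh, Pham, Shen, and Vinh \cite{MR3975670} as background. So strictly speaking there is no ``paper's own proof'' to compare against, but your proposal is worth measuring against the paper's \emph{methodology}, since the paper proves the generalizations (Theorems~\ref{A+AAA}, \ref{A+AAA2}) of this very statement.

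Your route --- a Szemer\'edi--Trotter type incidence bound for matrix ``lines'' $\ell_{m,b}=\{(x,y):y=m(x-b)\}$ proved by expander mixing on the biregular point--line graph, followed by Elekes' pigeonhole on $P=(A+A)\times AA$ and $L=\{\ell_{m,b'}:m\in A\cap GL_2, b'\in A\}$ --- is essentially the original KKPSV strategy, and the arithmetic you carry out downstream is correct: the identity $I(P,L)\gtrsim|A|^3$ with $|P|\le|A+A||AA|$, $|L|\lesssim|A|^2$ does give $|A+A||AA|\gtrsim\min\{q^4|A|,\ |A|^4/q^7\}$ after the dichotomy. The paper under review, by contrast, does \emph{not} construct a point--line incidence graph at all. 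Its engine (Proposition~\ref{main2}) is a bipartite graph on $(M_n(\mathbb{F}_q))^3\times(M_n(\mathbb{F}_q))^3$ encoding solutions to $ab+ef=c+d$, whose third eigenvalue is controlled by a rank-stratified decomposition of $NN^T$ into adjacency matrices $E_k,F_k$ of Cayley-type graphs; that bound is then funneled through an additive-energy estimate (Lemma~\ref{additiveenergy}) via the trick $a_1+b_1c_1c_1^{-1}=a_2+b_2c_2c_2^{-1}$, and Cauchy--Schwarz finishes. The energy route is what lets the authors squeeze $q^{n^2-\frac{n+1}{4}}$ in Theorem~\ref{A+AAA2}, beating the $q^{n^2-1/2}$ one gets from the matrix-affine-plane incidence graph (which is the $n=2$ statement you are proving). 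So the two approaches are genuinely different: yours reproduces the original $q^{7/2}$ theorem by incidences, while the paper's method is designed to push past that exponent.

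The one place your proposal is thin is exactly where you flag it: the spectral estimate $\lambda_3\lesssim q^{7/2}$ for the point--line graph is asserted via ``Fourier analysis plus a matrix Kloosterman-type bound'' but not carried out, and this is the entire technical content of the theorem. It is true (KKPSV prove $|\lambda_3|\lesssim q^{n^2-1/2}$, here $n=2$), but a reader cannot check your proof without it. Note also that the analogue of this step in the paper is done \emph{combinatorially}, not by character sums: the paper computes $NN^T$ explicitly as a linear combination of all-ones, identity, and rank-stratum adjacency matrices, and bounds eigenvalues by the trivial degree bound on each regular Cayley graph in the decomposition. If you want a self-contained proof in the spirit of this paper, that common-neighborhood decomposition of $NN^T$ for your point--line graph (classifying pairs of points by $\mathrm{rank}(x_1-x_2)$ and $\mathrm{rank}(y_1-y_2)$ and counting $m\in GL_2$ with $m(x_1-x_2)=y_1-y_2$) is likely the cleaner way to extract $q^{7/2}$ than a character-sum estimate; it also sidesteps any delicacy about the singular locus $Z_2(\mathbb{F}_q)$, which you correctly flag as a concern for the Kloosterman route.
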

Some other results were obtained as well. Their work was generalized by The and Vinh \cite{MR4130079}.

\begin{theorem}[\cite{MR4130079}]\label{A+AAA}
For every positive integer $n$, there exists $C(n)$ such that the following holds. If $A\subseteq M_n(\mathbb{F}_q)$ with $|A|\geq C(n)q^{n^2-1}$, then we have
$$
\max\{|A+A|, |AA|\}\gtrsim \min\left\{\frac{|A|^2}{q^{n^2-1/2}}, q^{n^2/2}|A|^{1/2}\right\}.
$$
\end{theorem}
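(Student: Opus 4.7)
The plan is to adapt Elekes' incidence-geometric proof of the sum-product estimate to the matrix ring $M_n(\mathbb{F}_q)$, substituting the Szemer\'edi--Trotter theorem with a Vinh-type spectral incidence bound obtained from the expander mixing lemma. First I would carry out Elekes' construction: define the line family
\[
L := \{\ell_{a, b} : a \in A \cap GL_n(\mathbb{F}_q),\ b \in A\}, \qquad \ell_{a, b} := \{(x, a(x-b)) : x \in M_n(\mathbb{F}_q)\},
\]
and the point set $P := (A+A) \times (AA) \subseteq M_n(\mathbb{F}_q)^2$. For every triple $(a, b, c)$ with $a \in A \cap GL_n(\mathbb{F}_q)$ and $b, c \in A$, the point $(b+c, ac)$ lies in $P$ and on the line $\ell_{a, b}$, because $a((b+c)-b) = ac$, and the invertibility of $a$ forces distinct such triples to yield distinct incidences. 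Since $|Z_n(\mathbb{F}_q)| \sim q^{n^2 - 1}$ and $|A| \geq C(n) q^{n^2 - 1}$ with $C(n)$ sufficiently large, one has $|A \cap GL_n(\mathbb{F}_q)| \gtrsim |A|$; hence this configuration has $|L| \leq |A|^2$ lines, $|P| \leq |A+A| \cdot |AA|$ points, and $I(P, L) \gtrsim |A|^3$ incidences.

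The second ingredient is a Vinh-type spectral bound: for any point set $P$ and line family $L$ in $M_n(\mathbb{F}_q)^2$,
\[
I(P, L) \leq \frac{|P|\,|L|}{q^{n^2}} + \lambda \sqrt{|P|\,|L|}, \qquad \lambda \lesssim q^{n^2 - 1/2}.
\]
I would derive this from the expander mixing lemma applied to the $q^{n^2}$-regular bipartite incidence graph on $M_n(\mathbb{F}_q)^2 \sqcup M_n(\mathbb{F}_q)^2$, bounding its spectral gap via additive Fourier analysis on $(M_n(\mathbb{F}_q), +)$: one stratifies by the rank of the ``slope'' matrix, estimates the contribution of each stratum through bilinear character sums of the form $\sum_{x \in M_n(\mathbb{F}_q)} \psi(\mathrm{tr}(SxT))$, and observes that the dominant error comes from the codimension-one stratum of singular matrices (of size $\sim q^{n^2 - 1}$). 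Plugging the incidence count from the first step into this bound and dividing by $|A|$ gives
\[
|A|^2 \lesssim \frac{|A| \cdot |A+A| \cdot |AA|}{q^{n^2}} + q^{n^2 - 1/2}\sqrt{|A+A| \cdot |AA|},
\]
and a case analysis on which term dominates yields $|A+A| \cdot |AA| \gtrsim \min\{q^{n^2}|A|,\ |A|^4/q^{2n^2 - 1}\}$, from which $\max\{|A+A|, |AA|\}^2 \geq |A+A|\cdot|AA|$ produces the stated lower bound after taking square roots.

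The main obstacle is the spectral gap estimate $\lambda \lesssim q^{n^2 - 1/2}$ in the second step. Unlike the scalar case $n=1$, the multiplication map $x \mapsto ax$ on $M_n(\mathbb{F}_q)$ fails to be a bijection whenever $a$ is singular, so the naive Fourier inversion accumulates a substantial error on the singular stratum, and the bilinear character sums in question must be estimated rank-by-rank. The loss of $q^{1/2}$ relative to the naive Ramanujan-type expectation $\lambda \sim q^{n^2/2}$ is exactly the price paid for the singular matrices, and it is also the reason the hypothesis $|A| \gtrsim q^{n^2 - 1}$ appears in the theorem: below this threshold, the singular stratum would swamp the whole argument and the resulting bound would become vacuous.
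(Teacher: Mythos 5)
Your proposal is correct, and since the paper does not actually prove Theorem \ref{A+AAA} (it only quotes it from \cite{MR4130079}), your argument is a valid reconstruction of the result behind that citation. The Elekes configuration gives $I(P,L)\gtrsim|A|^3$ once $|A\cap GL_n(\mathbb{F}_q)|\gtrsim|A|$ (which the hypothesis $|A|\geq C(n)q^{n^2-1}$ guarantees, since $|Z_n(\mathbb{F}_q)|\sim q^{n^2-1}$), the Vinh-type bound with $\lambda\lesssim q^{n^2-1/2}$ is correct for the point--line incidence graph on $M_n(\mathbb{F}_q)^2\sqcup M_n(\mathbb{F}_q)^2$, and the case split at the end delivers exactly the stated minimum. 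Your instinct about where the $q^{1/2}$ loss comes from is also right: two points $(x_1,y_1),(x_2,y_2)$ share the line $(m,c)$ iff $m(x_1-x_2)=y_1-y_2$, and stratifying by $k=\mathrm{rank}(x_1-x_2)$ the off-diagonal contributions to $NN^T-J$ are $\lesssim q^{n^2+2nk-k^2}$, maximised at the corank-one stratum $k=n-1$ where they equal $q^{2n^2-1}$; hence $\lambda_3\lesssim q^{n^2-1/2}$. The one imprecision is your description of \emph{how} the gap is obtained: the incidence matrix $N_{(x,y),(m,c)}=\mathbf{1}[y=mx+c]$ does not diagonalise under the additive characters $\psi(\mathrm{tr}(T\,\cdot\,))$ because of the bilinear term $mx$, so the rank-stratified $NN^T$ computation (the same device the paper uses for Proposition \ref{main2}) is what one actually runs, rather than a direct Fourier expansion.

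For comparison with the paper's own machinery: the paper does not reprove Theorem \ref{A+AAA} but instead establishes the strictly stronger Theorem \ref{A+AAA2} via a different graph. Rather than point--line incidences in $M_n(\mathbb{F}_q)^2$, it works on $(M_n(\mathbb{F}_q))^3\sqcup(M_n(\mathbb{F}_q))^3$ with adjacency $ab+ef=c+d$, obtains $\lambda_3\lesssim q^{2n^2-(n+1)/2}$ (a spectral saving of $q^{(n+1)/2}$ below the degree $q^{2n^2}$, versus your $q^{1/2}$ below $q^{n^2}$), and routes the count through the additive energy $E_+(A,A)\geq|A|^4/|A+A|$ via Lemma \ref{additiveenergy}. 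That larger graph, together with the quadratic dependence on $|AA|$ in the energy bound, is what buys the improved exponents $q^{n^2-(n+1)/4}$ and $q^{n^2/3}|A|^{2/3}$. Your proof gives precisely the weaker exponents in the statement as posed, which is what was asked.
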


\begin{theorem}[\cite{MR4130079}]\label{A+BC}
For $A, B, C\subseteq M_n(\mathbb{F}_q)$, we have
$$
|A+BC|\gtrsim\min\left\{q^{n^2}, \frac{|A||B||C|}{q^{2n^2-1}}\right\}.
$$
\end{theorem}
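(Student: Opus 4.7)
The plan is to reduce to an additive-energy bound via Cauchy--Schwarz and then estimate that energy through a character-sum calculation on the additive group of $M_n(\mathbb{F}_q)$. This is equivalent to a second-eigenvalue bound for the Cayley graph on $(M_n(\mathbb{F}_q),+)$ with connecting multi-set $\{bc:b\in B,\,c\in C\}$, whose nontrivial eigenvalues are $T(\chi):=\sum_{b\in B,c\in C}\chi(bc)$.

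First I would set $r(x)=|\{(a,b,c)\in A\times B\times C:a+bc=x\}|$, so that $\sum_x r(x)=|A||B||C|$ and $\operatorname{supp}(r)\subseteq A+BC$. Cauchy--Schwarz gives
\[
|A+BC|\ \geq\ \frac{(|A||B||C|)^2}{E},\qquad E:=\sum_x r(x)^2,
\]
where $E$ counts sextuples $(a_1,a_2,b_1,b_2,c_1,c_2)\in A^2\times B^2\times C^2$ with $a_1+b_1c_1=a_2+b_2c_2$. The theorem then reduces to the estimate $E\lesssim|A|^2|B|^2|C|^2/q^{n^2}+q^{2n^2-1}|A||B||C|$, from which $|A+BC|\gtrsim\min\{q^{n^2},|A||B||C|/q^{2n^2-1}\}$ follows upon taking whichever of the two terms dominates.

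To obtain this estimate, expand $E$ via the $q^{n^2}$ additive characters of $M_n(\mathbb{F}_q)$, parameterized as $\chi(X)=\psi(\operatorname{tr}(M_\chi X))$ for $M_\chi\in M_n(\mathbb{F}_q)$ and $\psi$ a fixed nontrivial additive character of $\mathbb{F}_q$. With $\widehat{A}(\chi):=\sum_{a\in A}\chi(a)$, orthogonality yields
\[
E=\frac{1}{q^{n^2}}\sum_\chi|\widehat{A}(\chi)|^2\,|T(\chi)|^2.
\]
The trivial character contributes exactly the main term $|A|^2|B|^2|C|^2/q^{n^2}$, and Parseval gives $\sum_{\chi\neq 0}|\widehat{A}(\chi)|^2\leq q^{n^2}|A|$; so it suffices to prove the uniform bound $\max_{\chi\neq 0}|T(\chi)|^2\lesssim q^{2n^2-1}|B||C|$.

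This character-sum bound is the main technical step. Cauchy--Schwarz in $b$, extended to all of $M_n(\mathbb{F}_q)$, gives
\[
|T(\chi)|^2\ \leq\ |B|\sum_{b\in M_n(\mathbb{F}_q)}\Big|\sum_{c\in C}\chi(bc)\Big|^2.
\]
Expanding the square, swapping summation order, and using $\operatorname{tr}(bYM_\chi)=\operatorname{tr}(YM_\chi b)$ with orthogonality over $\mathbb{F}_q^{n^2}$ collapses the inner $b$-sum: $\sum_{b\in M_n(\mathbb{F}_q)}\chi(b(c-c'))=q^{n^2}\,\mathbf{1}[(c-c')M_\chi=0]$. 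Hence
\[
|T(\chi)|^2\ \leq\ |B|\,q^{n^2}\cdot|\{(c,c')\in C^2:(c-c')M_\chi=0\}|.
\]
Since $M_\chi\neq 0$ for $\chi\neq 0$, the set $\{y\in M_n(\mathbb{F}_q):yM_\chi=0\}$ is a proper $\mathbb{F}_q$-subspace of $M_n(\mathbb{F}_q)$ (it misses $I$, since $I\cdot M_\chi=M_\chi\neq 0$), so has cardinality at most $q^{n^2-1}$. Combined with the trivial bound $|\{(c,c')\in C^2:c-c'=y\}|\leq|C|$, this yields $|T(\chi)|^2\leq q^{2n^2-1}|B||C|$, and assembling the pieces proves the theorem. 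The main obstacle is exactly this character-sum step; its essential structural input is that left-annihilation by any nonzero matrix defines a proper linear subspace of $M_n(\mathbb{F}_q)$.
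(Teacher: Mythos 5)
Your proof is correct, and it is worth emphasizing that the paper does not actually prove Theorem~\ref{A+BC} at all: it is cited from The--Vinh~\cite{MR4130079}, and the paper's own contribution is the stronger Theorem~\ref{A+BC2}, obtained via Proposition~\ref{main2}, a spectral estimate for an explicit bipartite graph on $(M_n(\mathbb{F}_q))^3$ whose adjacency operator $NN^T$ is decomposed by the rank of $\left(\begin{smallmatrix} a_1-a_2 & e_1-e_2 \end{smallmatrix}\right)$. Your route is genuinely different in execution: the same Cauchy--Schwarz reduction to the energy $E$, but then additive characters of $M_n(\mathbb{F}_q)$ rather than an explicit graph, reducing everything to the single bound $\max_{\chi\neq 0}|T(\chi)|^2\lesssim q^{2n^2-1}|B||C|$ for $T(\chi)=\sum_{b\in B,c\in C}\chi(bc)$, which you get from a second Cauchy--Schwarz (in $b$, extended to all of $M_n(\mathbb{F}_q)$) and orthogonality. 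This is the Fourier/Cayley-graph dual of the paper's expander-mixing argument and avoids the entire rank-stratified decomposition of $NN^T$.

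One substantive remark: the only place you lose is the estimate $|\{y\in M_n(\mathbb{F}_q):yM_\chi=0\}|\leq q^{n^2-1}$, which you justify merely by ``proper subspace.'' In fact this left-annihilator is $\{y:\text{every row of } y \text{ lies in the left kernel of } M_\chi\}$, which has dimension $n(n-r)$ where $r=\operatorname{rank}(M_\chi)\geq 1$; so its size is $q^{n(n-r)}\leq q^{n^2-n}$, not merely $q^{n^2-1}$. Plugging this in gives $|T(\chi)|^2\leq q^{2n^2-n}|B||C|$ and hence
$$
|A+BC|\gtrsim\min\left\{q^{n^2},\ \frac{|A||B||C|}{q^{2n^2-n}}\right\},
$$
which for every $n\geq 2$ is strictly stronger than the paper's own Theorem~\ref{A+BC2} (exponent $2n^2-\tfrac{n+1}{2}$). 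You should double-check this carefully, but as written your method appears to deliver more than the cited theorem or the paper's improvement of it, and the gap between $q^{n^2-1}$ and $q^{n(n-r)}$ is exactly where the slack sits.
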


\begin{theorem}[\cite{MR4130079}]\label{(A+B)C}
For $A, B\subseteq M_n(\mathbb{F}_q)$ and $C\subseteq GL_n(\mathbb{F}_q)$, we have
$$
|(A+B)C|\gtrsim\min\left\{q^{n^2}, \frac{|A||B||C|}{q^{2n^2-1}}\right\}.
$$
\end{theorem}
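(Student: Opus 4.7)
The plan is a Cauchy--Schwarz reduction followed by a spectral estimate on a bipartite graph built from the equation $(a+b)c=d$. Set $D:=(A+B)C$ and $r(d):=|\{(a,b,c)\in A\times B\times C:(a+b)c=d\}|$; since $r$ is supported on $D$ with $\sum_d r(d)=|A||B||C|$, Cauchy--Schwarz gives $|D|\geq |A|^{2}|B|^{2}|C|^{2}/E$, where $E:=\sum_d r(d)^{2}$. It therefore suffices to show $E\lesssim|A||B||C|\,q^{2n^{2}-1}$ in the regime $|D|\leq q^{n^{2}}/2$.

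To control $E$ I would use the biregular bipartite graph $\mathcal{H}$ with vertex classes $V_{1}=M_n(\mathbb{F}_q)\times GL_n(\mathbb{F}_q)$ and $V_{2}=M_n(\mathbb{F}_q)\times M_n(\mathbb{F}_q)$, where $(a,c)\in V_{1}$ is adjacent to $(b,d)\in V_{2}$ iff $(a+b)c=d$. The degrees are $q^{n^{2}}$ on the $V_{1}$-side and $|GL_n(\mathbb{F}_q)|$ on the $V_{2}$-side, and the number of edges between $A\times C$ and $B\times D$ is exactly $|A||B||C|$ --- one edge per triple. The expander mixing lemma, with $\lambda$ the second singular value of the biadjacency matrix $M$, then yields
\[
\Bigl||A||B||C|-\tfrac{|A||B||C|\,|D|}{q^{n^{2}}}\Bigr|\ \leq\ \lambda\sqrt{|A||B||C|\,|D|},
\]
which in the regime $|D|\leq q^{n^{2}}/2$ rearranges to $|D|\gtrsim|A||B||C|/\lambda^{2}$.

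The crux of the proof is therefore the spectral bound $\lambda\lesssim q^{n^{2}-1/2}$. A direct computation gives
\[
(MM^{T})_{(a,c),(a',c')}=\bigl|\{b\in M_n(\mathbb{F}_q):b(c-c')=a'c'-ac\}\bigr|,
\]
which equals $q^{n^{2}}$ on the diagonal, equals $1$ whenever $c-c'\in GL_n(\mathbb{F}_q)$, and equals $0$ or $q^{n(n-r)}$ with $r=\mathrm{rank}(c-c')<n$ otherwise. Splitting $MM^{T}$ according to the rank of $c-c'$, the ``invertible'' part factors as the tensor product of the Cayley graph of $(M_n(\mathbb{F}_q),+)$ with connection set $GL_n(\mathbb{F}_q)$ and an all-ones block of size $q^{n^{2}}$; its non-trivial spectrum inherits the Fourier estimate $|\widehat{\mathbf{1}_{GL_n(\mathbb{F}_q)}}(\xi)|\lesssim q^{n^{2}-1}$ for $\xi\neq 0$ (a consequence of $|Z_n(\mathbb{F}_q)|\lesssim q^{n^{2}-1}$), which gives a second singular value of the correct size. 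The ``singular'' remainder is stratified by $r=\mathrm{rank}(c-c')$, and the counts of rank-$r$ matrices combined with the entry bound $q^{n(n-r)}$ control its operator norm --- this rank stratification is the most delicate step of the argument.

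With $\lambda\lesssim q^{n^{2}-1/2}$ in hand, the displayed inequality gives $|D|\gtrsim|A||B||C|/q^{2n^{2}-1}$ when $|D|\leq q^{n^{2}}/2$, while otherwise $|D|\geq q^{n^{2}}/2$; together these match $|D|\gtrsim\min\{q^{n^{2}},|A||B||C|/q^{2n^{2}-1}\}$, as required.
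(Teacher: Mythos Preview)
Your route is genuinely different from the paper's. The paper never works with the four-variable equation $(a+b)c=d$; instead it builds a bipartite graph $G'$ on $(M_n(\mathbb{F}_q))^3\times(M_n(\mathbb{F}_q))^3$ with edge relation $ba+ef=c+d$, proves $|\lambda_3(G')|\lesssim q^{2n^2-1}$ via a double rank stratification by $(k_1,k_2)=(\mathrm{rank}(e_1-e_2),\mathrm{rank}(a_1-a_2))$, and then embeds the \emph{collision} count for $(A+B)C$ by taking $X=\{(c_1,-b_2,-a_1c_1)\}$ and $Y=\{(b_1,c_2,a_2c_2)\}$, for which the edge relation reads $(a_1+b_1)c_1=(a_2+b_2)c_2$. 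The payoff of the paper's larger graph is that every coordinate lives in $M_n(\mathbb{F}_q)$, so each stratum is a Cayley graph of an abelian group and regularity --- hence the eigenvalue-by-degree bound --- is automatic. Your smaller graph is more direct, but you pay with the constraint $c\in GL_n(\mathbb{F}_q)$. (Incidentally, your Cauchy--Schwarz reduction to $E=\sum_d r(d)^2$ is never actually used; the displayed expander-mixing inequality already yields $|D|\gtrsim|A||B||C|/\lambda^2$ on its own.)

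Your spectral sketch, however, has a real error. The ``invertible'' piece of $MM^{T}$ does \emph{not} factor through the additive Cayley graph of $(M_n(\mathbb{F}_q),+)$ with connection set $GL_n(\mathbb{F}_q)$: since $c,c'$ range only over $GL_n(\mathbb{F}_q)$, the $c$-block is the $|GL_n|\times|GL_n|$ matrix with $(c,c')$-entry $1$ iff $c-c'\in GL_n$, an induced subgraph (equivalently a Cayley graph of the non-abelian group $GL_n$) whose spectrum is not read off from the additive Fourier transform of the indicator of $GL_n$. Fortunately this piece is irrelevant: its entries are exactly $1$, so it is absorbed when you subtract the rank-one all-ones matrix $J$. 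What remains are the diagonal and the rank-$r$ strata for $0\le r\le n-1$, and the point you need is that each of these is \emph{regular}. This is not obvious --- $M_n\times GL_n$ is not a group under addition --- but after the bijection $(a,c)\mapsto(ac,c)=(x,c)$ the edge condition depends only on $(x-x',c-c')$ and is preserved by the transitive action $(x,c)\mapsto(xg+x_0,cg)$ with $g\in GL_n$, $x_0\in M_n$. Degree counts then give $\lambda^2\lesssim\max_{0\le r\le n-1}q^{\,n^2+2nr-r^2}=q^{2n^2-1}$, as required. So your approach can be completed, but by the same regularity-plus-degree mechanism the paper uses, not by additive Fourier analysis.
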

We refer the readers to \cite{2021arXiv210607328M}, \cite{MR4031446} and \cite{2021arXiv210603319V} for related results.

In this paper, we give some new results of sum-product estimates, which are also generalizations of the results in \cite{2021arXiv210607328M}.
\begin{theorem}\label{A+BC2}
For $A, B, C\subseteq M_n(\mathbb{F}_q)$, we have
$$
|A+BC|\gtrsim\min\left\{q^{n^2}, \frac{|A||B||C|}{q^{2n^2-\frac{n+1}{2}}}\right\}.
$$
In particular, if $|A||B||C|\gtrsim q^{3n^2-\frac{n+1}{2}}$, then $|A+BC|\gtrsim q^{n^2}$.
\end{theorem}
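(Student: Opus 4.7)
The approach is Fourier-analytic on the additive group of $M_n(\mathbb{F}_q)$. Write $S := A+BC$ and let $r(M) := |\{(b,c) \in B\times C : bc = M\}|$, so $\sum_M r(M) = |B||C|$. Since each of the $|A||B||C|$ triples $(a,b,c)$ produces an element of $S$,
$$
|A||B||C| \;=\; \sum_{s \in S}\,(1_A * r)(s).
$$
Cauchy--Schwarz then gives $(|A||B||C|)^2 \leq |S|\cdot E^*$, where $E^* := \sum_s (1_A*r)(s)^2$, reducing the task to an upper bound on $E^*$.

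By Plancherel on $(M_n(\mathbb{F}_q),+)$, with $\psi$ a fixed nontrivial additive character of $\mathbb{F}_q$ and
$$
F(N) \;:=\; \sum_{b \in B,\, c \in C}\psi(\mathrm{tr}(Nbc)), \qquad N \in M_n(\mathbb{F}_q),
$$
one has $E^* = q^{-n^2}\sum_N|\widehat{1_A}(N)|^2|F(N)|^2$. The $N=0$ term equals $q^{-n^2}|A|^2|B|^2|C|^2$: if it dominates $E^*$ we obtain $|S| \gtrsim q^{n^2}$ directly. Otherwise, the tail, bounded via Parseval $\sum_N|\widehat{1_A}(N)|^2 = q^{n^2}|A|$ by $|A|\cdot\max_{N\neq 0}|F(N)|^2$, yields
$$
|S| \;\gtrsim\; \frac{|A||B|^2|C|^2}{\max_{N\neq 0}|F(N)|^2}.
$$

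The crux is the character-sum bound on $F(N)$ for nonzero $N$. Cauchy--Schwarz in $b$, expanding the inner square, swapping sums, and using the orthogonality $\sum_{b \in M_n(\mathbb{F}_q)}\psi(\mathrm{tr}(Pb)) = q^{n^2}\mathbf{1}_{P=0}$, reduces the estimate to counting pairs $(c,c') \in C \times C$ with $(c-c')N = 0$. Since $\{x\in M_n(\mathbb{F}_q) : xN = 0\}$ is a subspace of dimension $n(n-\mathrm{rank}(N))$, we obtain
$$
|F(N)|^2 \;\leq\; |B|\cdot|C|\cdot q^{2n^2 - n\cdot\mathrm{rank}(N)}.
$$
For $\mathrm{rank}(N) \geq 1$ (the worst case being $\mathrm{rank}(N)=1$), this gives $\max_{N\neq 0}|F(N)|^2 \leq |B||C|\cdot q^{2n^2-n}$. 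Substituting back yields $|S| \gtrsim |A||B||C|/q^{2n^2-n}$, which is at least as strong as the desired $|A||B||C|/q^{2n^2-(n+1)/2}$ since $n \geq (n+1)/2$ for every $n \geq 1$.

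The main obstacle is controlling $F(N)$ when $\mathrm{rank}(N)$ is small: low-rank $N$ have the largest left-annihilators in $M_n(\mathbb{F}_q)$, which inflates the Cauchy--Schwarz bound above, so rank one is the tightest case. The plain Cauchy--Schwarz argument already suffices for the stated exponent $(n+1)/2$; a more delicate rank-stratified Plancherel analysis --- apportioning the Fourier mass of $1_A$ by rank and using that rank-$r$ matrices number only $\sim q^{r(2n-r)}$ --- could in principle sharpen the exponent further.
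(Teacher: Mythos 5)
Your Fourier-analytic argument is correct, takes a genuinely different route from the paper, and in fact establishes something strictly stronger than the stated theorem. The paper derives Theorem~\ref{A+BC2} from the general counting bound of Proposition~\ref{main2}, which is proved by building a bipartite graph on $(M_n(\mathbb{F}_q))^3\times(M_n(\mathbb{F}_q))^3$ whose edges encode $ab+ef=c+d$, decomposing $NN^T$ into rank-stratified Cayley graphs $E_k,F_k$, bounding each component's eigenvalues crudely by its degree, and invoking the expander mixing lemma; the worst case $k=n-1$ is what produces the exponent $2n^2-\tfrac{n+1}{2}$. You instead bound the same energy $E^*=\sum_s t(s)^2=N(B,C,A,-A,-B,C)$ directly by Plancherel on $(M_n(\mathbb{F}_q),+)$, which in effect computes the relevant Cayley-graph eigenvalues exactly via the dual variable $N$: orthogonality reduces $|F(N)|^2$ to counting pairs $(c,c')\in C^2$ with $(c-c')N=0$, rank-one $N$ is the worst case, and you obtain $\max_{N\neq 0}|F(N)|^2\leq|B||C|\,q^{2n^2-n}$, hence $|A+BC|\gtrsim\min\{q^{n^2},|A||B||C|/q^{2n^2-n}\}$. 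This strictly improves the theorem's exponent $2n^2-\tfrac{n+1}{2}$ for every $n\geq 2$ (and recovers it for $n=1$). I checked each step: the identity $(1_A*r)(s)=t(s)$, Parseval $\sum_N|\widehat{1_A}(N)|^2=q^{n^2}|A|$, the non-degeneracy of the trace pairing so that $N\mapsto\psi(\mathrm{tr}(N\cdot))$ enumerates all additive characters of $M_n(\mathbb{F}_q)$, and $\dim\{x\in M_n(\mathbb{F}_q):xN=0\}=n(n-\mathrm{rank}\,N)$, so the chain of inequalities is sound. The gain over the paper is genuine and comes from avoiding the lossy degree bound on the component Cayley graphs; an analogous Fourier expansion of the full count $N(A,B,C,D,E,F)$, with the Cauchy--Schwarz bound applied to the bilinear factors in $(A,B)$ and $(E,F)$ and Parseval to the factors from $C$ and $D$, would sharpen Proposition~\ref{main2} itself to the exponent $2n^2-n$.
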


\begin{theorem}\label{A+AAA2}
For every positive integer $n$, there exists $C(n)$ such that the following holds. If $A\subseteq M_n(\mathbb{F}_q)$ with $|A|\geq C(n)q^{n^2-1}$, we have
$$
\max\{|A+A|, |AA|\}\gtrsim \min\left\{\frac{|A|^2}{q^{n^2-\frac{n+1}{4}}}, q^{n^2/3}|A|^{2/3}\right\}.
$$
\end{theorem}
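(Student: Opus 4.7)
The plan is to combine the improved $|A+BC|$ estimate from Theorem~\ref{A+BC2} with a Plünnecke--Ruzsa-type inequality in the abelian group $(M_n(\mathbb{F}_q),+)$, paralleling the route used in the proof of Theorem~\ref{A+AAA} but with the sharper input.

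First I will apply Theorem~\ref{A+BC2} with $B=C=A$ to conclude that
\[
|A+AA| \gtrsim \min\!\left\{q^{n^2},\ \frac{|A|^3}{q^{2n^2-(n+1)/2}}\right\}.
\]
This is the only place where the improved saving $(n+1)/2$ over the older Theorem~\ref{A+BC} is used, and it is ultimately responsible for the exponent $(n+1)/4$ in the conclusion. Next I will establish a Plünnecke--Ruzsa-style upper bound
\[
|A+AA| \lesssim \frac{|A+A|\cdot|AA|}{|A|}.
\]
Such an inequality does not hold in general settings (it fails already for arithmetic progressions in $\mathbb{Z}$), but the density hypothesis $|A|\geq C(n)q^{n^2-1}$ forces each of $|A+A|$, $|AA|$, $|A+AA|$ to be comparable in size to $q^{n^2}$, making the inequality accessible through a Fourier/spectral argument in $(M_n(\mathbb{F}_q),+)\cong \mathbb{F}_q^{n^2}$ together with the standard Plünnecke machinery.

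Finally I will combine these two bounds:
\[
|A+A|\cdot|AA|\;\gtrsim\; |A|\cdot|A+AA|\;\gtrsim\; \min\!\left\{|A|\,q^{n^2},\ \frac{|A|^4}{q^{2n^2-(n+1)/2}}\right\},
\]
and then, since $\max\{|A+A|,|AA|\}\geq\sqrt{|A+A|\cdot|AA|}$,
\[
\max\{|A+A|,|AA|\}\;\gtrsim\; \min\!\left\{q^{n^2/2}|A|^{1/2},\ \frac{|A|^2}{q^{n^2-(n+1)/4}}\right\}.
\]
Because $|A|\leq q^{n^2}$, one has $q^{n^2/2}|A|^{1/2}\geq q^{n^2/3}|A|^{2/3}$, so the bound stated in Theorem~\ref{A+AAA2} follows.

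The hard part will be the Plünnecke--Ruzsa-style upper bound in the second step: unlike in $\mathbb{Z}$, the density hypothesis must be used crucially to force this inequality to hold, and the spectral/Fourier argument will need to be tailored to the matrix ring structure (rather than relying only on the abelian additive structure) in order to obtain constants depending only on $n$ and not on $q$.
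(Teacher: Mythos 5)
Your plan is genuinely different from the paper's and contains a fatal gap precisely at the step you flag as ``the hard part.''

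The inequality
\[
|A+AA| \lesssim \frac{|A+A|\cdot|AA|}{|A|}
\]
is not a Pl\"unnecke--Ruzsa inequality and, as you yourself observe, is false in general. Worse, the reason you give for why it should nevertheless hold here is incorrect: the hypothesis $|A|\geq C(n)q^{n^2-1}$ does \emph{not} force $|A+A|$, $|AA|$, or $|A+AA|$ to be comparable to $q^{n^2}$. For instance, $A$ could be (most of) an additive hyperplane in $M_n(\mathbb{F}_q)\cong\mathbb{F}_q^{n^2}$ (say, trace-zero matrices intersected with $GL_n$), giving $|A|\sim q^{n^2-1}$ but $|A+A|\lesssim q^{n^2-1}$; or $A$ could be the multiplicative subgroup $SL_n(\mathbb{F}_q)$, giving $|A|\sim q^{n^2-1}$ but $|AA|=|A|$. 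In both cases the left-hand side is at the density threshold while the quantities you claim are ``forced to be $\sim q^{n^2}$'' are not. So there is no spectral/Fourier argument waiting to rescue this step; the claimed inequality simply does not follow from the hypotheses. A further warning sign: if your chain of inequalities went through, it would yield $\max\{|A+A|,|AA|\}\gtrsim\min\{q^{n^2/2}|A|^{1/2},\,|A|^2/q^{n^2-(n+1)/4}\}$, which is strictly stronger than the theorem's $q^{n^2/3}|A|^{2/3}$ term --- you should be suspicious of a simple route that overshoots the stated result.

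The paper's actual argument avoids any sumset-of-sumset inequality. It introduces the additive energy $E_+(A,B)=|\{(a_1,a_2,b_1,b_2):a_1+b_1=a_2+b_2\}|$, proves the bound
\[
E_+(A,B)\lesssim \frac{|BC|^2|A|^2}{q^{n^2}}+q^{2n^2-\frac{n+1}{2}}\frac{|BC|\,|A|}{|C|}
\]
for $C\subseteq GL_n(\mathbb{F}_q)$ by rewriting $a_1+b_1=a_2+b_2$ as $a_1+(b_1c_1)c_1^{-1}=a_2+(b_2c_2)c_2^{-1}$ and invoking the counting Proposition, and then uses the Cauchy--Schwarz identity $(|A||B|)^2\leq |A+B|\,E_+(A,B)$. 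Specializing to $A=B=C$ (after discarding the singular part of $A$, which the density hypothesis allows) and splitting into the two cases of which term dominates yields exactly the exponents $q^{n^2/3}|A|^{2/3}$ and $|A|^2/q^{n^2-(n+1)/4}$. This energy route uses $|AA|$ and $|A+A|$ on equal footing without needing to control $|A+AA|$ from above, which is exactly the object your argument cannot handle.
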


Observe that Theorem \ref{A+BC2} is better than Theorem \ref{A+BC}. And Theorem \ref{A+AAA2} is better than Theorem \ref{A+AAA} in some cases. For example, put $n=4$ and $|A|\sim q^{15.01}$. Then Theorem \ref{A+AAA} gives that
$$
\max\{|A+A|, |AA|\}\gtrsim q^{14.52},
$$
while Theorem \ref{A+AAA2} gives that
$$
\max\{|A+A|, |AA|\}\gtrsim q^{15.27}.
$$

Finally, we will give a new proof of Theorem \ref{(A+B)C}.

\section{Preliminaries}
Let $G=(U\cup V, E)$ be a biregular graph. We write $\deg(U)$ for the common degree of vertices in $U$. Let $A_G$ be the adjacency matrix of $G$, and suppose that $|\lambda_1|\geq|\lambda_2|\geq|\lambda_3|\cdots\geq|\lambda_n|$ are eigenvalues of $A_G$. Note that in a bipartite graph, we have $\lambda_1=-\lambda_2$. We call $\lambda_3$ the third eigenvalue of $G$ and we need the following lemma, which is a variant of the expander mixing lemma.
\begin{lemma}[\cite{MR3937692}]\label{expander}
Let $G$ be a biregular graph with parts $U$ and $V$. Then, for every pair $X\subseteq U$ and $Y\subseteq V$, the number of edges between $X$ and $Y$, denoted by $e(X, Y)$, satisfies
$$
\left|e(X, Y)-\frac{\deg(U)}{|V|}|X||Y|\right|\leq|\lambda_3|\sqrt{|X||Y|},
$$
where $\lambda_3$ is the third eigenvalue of $G$.
\end{lemma}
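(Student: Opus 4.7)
\noindent\textbf{Proof plan for Lemma \ref{expander}.} The plan is to exploit the spectral decomposition of $A_{G}$, isolate the contribution from the top two ``Perron'' eigenvectors as the main term, and control the remainder by the third eigenvalue.

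First, I would express $e(X,Y)$ as a bilinear form. Letting $f,g\in\mathbb{R}^{|U|+|V|}$ be the indicator vectors of $X$ and $Y$ supported on $U$ and $V$ respectively, we have $e(X,Y)=f^{T}A_{G}\,g$, with no factor of two because $f$ and $g$ live on opposite sides of the bipartition.

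Next, I would write down the top two eigenvectors explicitly. Biregularity yields $\deg(U)|U|=\deg(V)|V|$, and a direct check shows that $v_{1}$ defined by $v_{1}(u)=\sqrt{|V|}$ on $U$ and $v_{1}(v)=\sqrt{|U|}$ on $V$ is an eigenvector of $A_{G}$ with eigenvalue $\lambda_{1}=\sqrt{\deg(U)\deg(V)}$; its ``bipartite partner'' $v_{2}$, obtained by flipping the sign on $V$, is an eigenvector for $-\lambda_{1}$. After normalisation these give an orthonormal pair $u_{1},u_{2}$, and every other eigenvector $u_{i}$ ($i\geq 3$) is orthogonal to both, with $|\lambda_{i}|\leq|\lambda_{3}|$.

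Then, expanding $e(X,Y)=\sum_{i}\lambda_{i}\langle f,u_{i}\rangle\langle g,u_{i}\rangle$, the contributions from $u_{1}$ and $u_{2}$ add rather than cancel, because $g$ changes sign between $u_{1}$ and $u_{2}$ while $f$ does not. A short computation using $\lambda_{1}=\sqrt{\deg(U)\deg(V)}$ together with $\deg(U)|U|=\deg(V)|V|$ yields precisely $\deg(U)|X||Y|/|V|$, which is the claimed main term. The tail $\sum_{i\geq 3}\lambda_{i}\langle f,u_{i}\rangle\langle g,u_{i}\rangle$ is then controlled by Cauchy--Schwarz: it is at most $|\lambda_{3}|\,\|f\|\,\|g\|=|\lambda_{3}|\sqrt{|X||Y|}$, because projecting $f$ and $g$ onto $\operatorname{span}(u_{1},u_{2})^{\perp}$ only decreases their norms $\sqrt{|X|}$ and $\sqrt{|Y|}$.

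The step I expect to be the main obstacle is the bookkeeping at the top of the spectrum: one has to verify that for a biregular bipartite graph the leading spectrum really is the $\pm\lambda_{1}$ pair described above (rather than, say, a higher-multiplicity cluster coming from disconnectedness), and then chase the normalisation constants so that the main term appears as $\deg(U)|X||Y|/|V|$ rather than a symmetrised variant such as $\sqrt{\deg(U)\deg(V)}\,|X||Y|/\sqrt{|U||V|}$. Once that is in place, the residual bound is a one-line application of Cauchy--Schwarz using the defining property of $|\lambda_{3}|$, and no further graph-theoretic input is needed.
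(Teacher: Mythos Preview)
The paper does not prove Lemma~\ref{expander}; it is quoted verbatim as a known result from \cite{MR3937692} and used as a black box, so there is no in-paper argument to compare your proposal against. That said, your outline is the standard (and correct) proof of the bipartite expander mixing lemma: express $e(X,Y)=f^{T}A_{G}g$, peel off the contribution of the explicit $\pm\lambda_{1}$ eigenpair, and bound the orthogonal remainder by $|\lambda_{3}|\,\|f\|\,\|g\|$ via Cauchy--Schwarz.

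One remark on the ``obstacle'' you flag. You do not actually need to know that the top spectrum is a simple $\pm\lambda_{1}$ pair. The vectors $v_{1},v_{2}$ you wrote down are genuine eigenvectors of $A_{G}$ for $\pm\lambda_{1}$ regardless of multiplicity; extend $\{u_{1},u_{2}\}$ to any orthonormal eigenbasis. The projections of $f$ and $g$ onto $u_{1},u_{2}$ give exactly $\deg(U)|X||Y|/|V|$ (your normalisation check is right, using $\deg(U)|U|=\deg(V)|V|$), and every remaining eigenvalue satisfies $|\lambda_{i}|\leq|\lambda_{3}|$ by the definition of $\lambda_{3}$ in the paper, even if $|\lambda_{3}|=\lambda_{1}$ due to disconnectedness. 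So the bookkeeping you anticipate is lighter than you suggest, and no connectedness hypothesis is needed.
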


\begin{lemma}[\cite{MR3937692}]\label{NNT}
Let $G$ be a biregular graph with parts $U$ and $V$, and $|U| = m, |V| = n$. We label vertices of $G$ from $1$ to $|U| + |V|$. Let $A_G$ be the adjacency matrix of $G$ having the form
$$
A_G=\left(
  \begin{array}{cc}
    0 & N \\
    N^T & 0 \\
  \end{array}
\right),
$$
where $N$ is the $|U| \times |V|$ matrix, and $N_{ij} = 1$ if and only if there is an edge between $i$ and $j$. Let $v^3 = (u_1, \ldots , u_m, v_1, \ldots , v_n)^T$ be an eigenvector of $A_G$ corresponding to the eigenvalue $\lambda_3$. Then we have

(i) $(u_1, \ldots , u_m)^T$ is an eigenvector of $NN^T$, and

(ii) $J(u_1, \ldots, u_m)^T = 0$, where $J$ is the $m \times m$ all-ones matrix.
\end{lemma}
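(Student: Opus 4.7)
The plan is to exploit the bipartite block structure of $A_G$ together with the symmetry of $A_G$ and the Perron structure of a biregular bipartite graph. Writing $u = (u_1, \ldots, u_m)^T$ and $w = (v_1, \ldots, v_n)^T$, the eigenvalue equation $A_G v^3 = \lambda_3 v^3$ splits into two block components, yielding the coupled relations $Nw = \lambda_3 u$ and $N^T u = \lambda_3 w$. These two identities are the only tools needed.

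For part (i) I simply substitute one into the other: $NN^T u = N(\lambda_3 w) = \lambda_3(Nw) = \lambda_3^2 u$, so $u$ is an eigenvector of $NN^T$ with eigenvalue $\lambda_3^2$. The boundary case $u = 0$ would force $\lambda_3 w = N^T u = 0$ and hence $\lambda_3 = 0$, a degeneracy which does not arise in the intended applications and can, if necessary, be handled by working on the $w$-side of the bipartition.

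For part (ii), the strategy is to identify the eigenvectors for $\pm \lambda_1$ explicitly and then invoke the fact that eigenvectors of the symmetric matrix $A_G$ belonging to distinct eigenvalues are orthogonal. Using the biregularity identities $N \mathbf{1}_V = \deg(U)\, \mathbf{1}_U$ and $N^T \mathbf{1}_U = \deg(V)\, \mathbf{1}_V$, a direct computation shows that the vectors $(\sqrt{\deg(U)}\, \mathbf{1}_U^T,\, \pm \sqrt{\deg(V)}\, \mathbf{1}_V^T)^T$ are eigenvectors of $A_G$ with eigenvalues $\pm\sqrt{\deg(U)\deg(V)} = \pm\lambda_1$, and their span equals the span of $(\mathbf{1}_U^T, 0)^T$ and $(0, \mathbf{1}_V^T)^T$. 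Since $\lambda_3 \ne \pm \lambda_1$, the vector $v^3$ is orthogonal to this subspace; in particular, it is orthogonal to $(\mathbf{1}_U^T, 0)^T$, which gives $\sum_{i=1}^{m} u_i = 0$ and therefore $J(u_1,\ldots,u_m)^T = 0$.

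The only point of care is confirming $\lambda_3 \neq \pm \lambda_1$; this follows from the standard Perron--Frobenius simplicity of $\lambda_1$ for a connected biregular bipartite graph, and it will be automatic for the Cayley-type graphs used later in the paper. Beyond this, the argument is a short piece of linear algebra, with no genuine obstacle.
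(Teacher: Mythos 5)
The paper cites this lemma directly from \cite{MR3937692} and does not reprove it, so there is no in-paper argument to compare against; I evaluate your proof on its own merits. Your argument is correct and is the standard one. Part (i) follows cleanly from the block equations $Nw = \lambda_3 u$ and $N^T u = \lambda_3 w$, giving $NN^T u = \lambda_3^2 u$. Part (ii) follows from exhibiting $(\sqrt{\deg(U)}\,\mathbf{1}_U^T,\ \pm\sqrt{\deg(V)}\,\mathbf{1}_V^T)^T$ as $\pm\lambda_1$-eigenvectors of $A_G$ whose span contains $(\mathbf{1}_U^T,0)^T$, and then invoking orthogonality of eigenvectors of the symmetric matrix $A_G$ for distinct eigenvalues. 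The two caveats you flag, namely $u \neq 0$ for (i) and $\lambda_3 \neq \pm\lambda_1$ (Perron simplicity, i.e.\ connectedness) for (ii), are indeed hypotheses the lemma leaves implicit, and they hold for the biregular Cayley-type graphs used in the paper. A small alternative for (ii) that bypasses writing down the Perron eigenvectors explicitly: biregularity forces $JNN^T = \deg(U)\deg(V)\,J = \lambda_1^2 J$, so applying $J$ to $NN^T u = \lambda_3^2 u$ gives $(\lambda_3^2 - \lambda_1^2)Ju = 0$, and $\lambda_3^2 \neq \lambda_1^2$ forces $Ju = 0$. Either route is fine.
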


\section{A key lemma}\label{keylemma}
Given sets $A, B, C, D, E, F\subseteq M_n(\mathbb{F}_q)$, let $N(A, B, C, D, E, F)$ be the number of solutions to the equation
\begin{equation}
  ab+ef=c+d,\quad(a, b, c, d, e, f)\in A\times B\times C\times D\times E\times F.
\end{equation}
We have the following proposition.
\begin{proposition}\label{main2}
For every positive integer $n$, there exists $C(n)$ such that the following holds. For $A, B, C, D, E, F\subseteq M_n(\mathbb{F}_q)$, we have
$$
N(A, B, C, D, E, F)\leq C(n)\left(\frac{|A||B||C||D||E||F|}{q^{n^2}}+q^{2n^2-\frac{n+1}{2}}\sqrt{|A||B||C||D||E||F|}\right).
$$
\end{proposition}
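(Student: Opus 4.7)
The plan is to encode the equation $ab + ef = c + d$ as an edge relation in a bipartite graph and then apply the expander mixing lemma (Lemma \ref{expander}) together with a spectral bound. Specifically, I would define the bipartite graph $G$ with vertex classes $U = V = M_n(\mathbb{F}_q)^3$, and declare $(x_1, x_2, x_3) \in U$ adjacent to $(y_1, y_2, y_3) \in V$ iff $x_1 y_1 + x_2 y_2 = x_3 + y_3$. The graph is biregular with $\deg(U) = \deg(V) = q^{2n^2}$ (for any vertex, the pair $(y_1, y_2)$ is free and $y_3$ is determined by the edge relation), and the key observation is that $N(A, B, C, D, E, F)$ equals the number of edges between $A \times E \times C \subseteq U$ and $B \times F \times D \subseteq V$. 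Lemma \ref{expander} then gives
\[
\left| N(A, B, C, D, E, F) - \frac{|A||B||C||D||E||F|}{q^{n^2}} \right| \leq |\lambda_3| \sqrt{|A||B||C||D||E||F|},
\]
reducing the proposition to the eigenvalue estimate $|\lambda_3(G)| \lesssim q^{2n^2 - (n+1)/2}$.

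By Lemma \ref{NNT}, it suffices to bound the second-largest eigenvalue of $NN^T$, where $N$ is the biadjacency matrix of $G$. A direct count gives
\[
(NN^T)_{X, X'} = \bigl|\bigl\{(y_1, y_2) \in M_n(\mathbb{F}_q)^2 : (x_1 - x_1')\, y_1 + (x_2 - x_2')\, y_2 = x_3 - x_3'\bigr\}\bigr|,
\]
which depends only on the difference $X - X'$. Hence $NN^T$ is a convolution operator on the additive group $M_n(\mathbb{F}_q)^3$, and its eigenvalues are Fourier coefficients of its row function, indexed by additive characters $\chi_{(a, b, g)}$ with $(a, b, g) \in M_n(\mathbb{F}_q)^3$. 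After unpacking the character sum and using cyclicity of the trace, each such coefficient factors as a product of two one-variable sums, each of which reduces by additive orthogonality to counting solutions $y \in M_n(\mathbb{F}_q)$ to a matrix equation of the form $y g = z$.

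The main obstacle is the linear-algebraic analysis of this solution count. Applying rank-nullity to the right-multiplication map $y \mapsto y g$, when $g$ has rank $r \geq 1$ and $z$ lies in its image (equivalently, the rows of $z$ lie in the row space of $g$) the count is $q^{n(n-r)}$, and otherwise zero; this is where matrix-ring structure, as opposed to the scalar case, improves the bound. Consequently each nontrivial Fourier coefficient is at most $q^{4n^2 - 2nr}$, maximized at $r = 1$, which yields $|\lambda_3(G)|^2 \leq q^{4n^2 - 2n}$ and hence $|\lambda_3(G)| \leq q^{2n^2 - n} \leq q^{2n^2 - (n+1)/2}$ for every $n \geq 1$. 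Combined with the expander mixing bound above, this proves the proposition.
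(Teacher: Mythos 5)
Your proof is correct, and it takes a genuinely different (and in fact sharper) route to the eigenvalue estimate than the paper. The graph construction and the reduction via the expander mixing lemma are the same. The difference is in how $|\lambda_3|$ is bounded. The paper expands
\[
NN^T \;=\; q^{n^2}J+(\deg(U)-q^{n^2})I+\sum_{k=1}^{n-1}(q^{2n^2-nk}-q^{n^2})E_k-\sum_{k=0}^{n-1}q^{n^2}F_k,
\]
where $E_k$ and $F_k$ are adjacency matrices of regular graphs stratified by the rank of $[\,a_1-a_2 \mid e_1-e_2\,]$, and then bounds every eigenvalue of $E_k,F_k$ crudely by the corresponding vertex degree. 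The dominant term there comes from $q^{n^2}F_{n-1}$, giving $|\lambda_3^2-\deg(U)+q^{n^2}|\lesssim q^{4n^2-n-1}$ and hence $|\lambda_3|\lesssim q^{2n^2-(n+1)/2}$. You instead observe that $(NN^T)_{X,X'}$ depends only on $X-X'$, so $NN^T$ is a convolution operator on the abelian group $(M_n(\mathbb{F}_q))^3$ and its eigenvalues are exactly the Fourier coefficients $\widehat w(\alpha,\beta,\gamma)$. Unpacking and using cyclicity of the trace, one indeed gets the factorization $\widehat w(\alpha,\beta,\gamma)=S(\alpha,\gamma)S(\beta,\gamma)$ with $S(\eta,\gamma)=q^{n^2}\,\bigl|\{y\in M_n(\mathbb{F}_q):y\gamma=-\eta\}\bigr|$, and rank--nullity gives $S(\eta,\gamma)\in\{0,\,q^{2n^2-n\,\mathrm{rank}(\gamma)}\}$. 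For $\gamma=0$ and $(\alpha,\beta)\neq(0,0)$ the coefficient vanishes, so the maximum over nontrivial characters occurs at $\mathrm{rank}(\gamma)=1$, giving $\lambda_3^2\leq q^{4n^2-2n}$, i.e.\ $|\lambda_3|\leq q^{2n^2-n}$.

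Two remarks. First, since $n\geq(n+1)/2$ for all $n\geq1$, your bound implies the one in the proposition, so the proof is complete; but note that your bound is strictly stronger for $n\geq2$ (for $n=1$ both approaches give the exact second eigenvalue $q^2$). Because the $E_k,F_k$ in the paper are themselves Cayley graphs of this abelian group, the paper's degree bound for their spectral radii is wasteful, and the direct Fourier computation recovers what is lost. If carried through the rest of the paper, this sharper value of $|\lambda_3|$ would replace the exponent $2n^2-\frac{n+1}{2}$ by $2n^2-n$ throughout Theorem \ref{A+BC2} and its corollaries. Second, your sketch elides the case $\gamma=0$, $(\alpha,\beta)\neq(0,0)$; it is worth noting explicitly that this case gives a zero Fourier coefficient so that it does not dominate the bound.
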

\begin{proof}
We construct a graph $G=(U\cup V, E)$, where $U=V=(M_n(\mathbb{F}_q))^3$. There is an edge between $(a, e, c)\in U$ and $(b, f, d)\in V$ if and only if $ab+ef=c+d$. It is easy to check that
$$
|U|=|V|=(|M_n(\mathbb{F}_q)|)^3=q^{3n^2}.
$$
Given $(a, e, c)\in U$ and $(b, f)\in(M_n(\mathbb{F}_q))^2$, $d=ab+ef-c$ is uniquely determined. So the number of neighbors of $(a, e, c)\in U$ in the graph $G$ is $\deg(U)=q^{2n^2}$. And
$$
\frac{\deg(U)}{|V|}=\frac{1}{q^{n^2}}.
$$
Similarly, the number of neighbors of $(b, f, d)\in V$ in the graph $G$ is $q^{2n^2}$ too.

For any two points $(a_1, e_1, c_1)$ and $(a_2, e_2, c_2)$ in $U$, we count the number of their common neighbors, i.e., the number of solutions $(b, f, d)$ to the equations
\begin{equation}\label{7}
  a_1b+e_1f=c_1+d, \quad a_2b+e_2f=c_2+d.
\end{equation}
So we have
\begin{equation}
(a_1-a_2)b+(e_1-e_2)f=c_1-c_2,
\end{equation}
or equivalently,
\begin{equation}\label{8}
  \left(
    \begin{array}{cc}
      a_1-a_2 & e_1-e_2 \\
    \end{array}
  \right)
\left(
  \begin{array}{c}
    b \\
    f \\
  \end{array}
\right)=
c_1-c_2.
\end{equation}
A solution $\left(
              \begin{array}{c}
                b \\
                f \\
              \end{array}
            \right)$
to equation (\ref{8}) corresponds to a solution $(b, f, a_1b+e_1f-c_1)$ to equations (\ref{7}). So we only need to determine the number of solutions to equation (\ref{8}).

We need the following theorems in linear algebra.
\begin{theorem}
Let $A$ be a matrix of size $m\times n$. All of the solutions to the equation $AX=0$ form a vector space of dimension $n-\mathrm{rank}(A)$.
\end{theorem}
\begin{theorem}
Let $A$ be a matrix of size $m\times n$, and $b$ be a matrix of size $m\times 1$. Then the equation $AX=b$ has a solution if and only if
$$
\mathrm{rank}(A)=\mathrm{rank}\left(
              \begin{array}{cc}
                A & b \\
              \end{array}
            \right).
$$
Once $AX=b$ has a solution $X_0$, then every solution can be written as $X=X_0+X_1$, where $X_1$ is any solution to $AX=0$.
\end{theorem}

Using these theorems, we see that equation (\ref{8}) has a solution if and only if
$$
 \mathrm{rank} \left(
    \begin{array}{cc}
      a_1-a_2 & e_1-e_2 \\
    \end{array}
  \right)=
  \mathrm{rank}\left(
    \begin{array}{ccc}
      a_1-a_2 & e_1-e_2&c_1-c_2 \\
    \end{array}
  \right).
$$
And once equation (\ref{8}) has a solution, the number of solutions $\left(
              \begin{array}{c}
                b \\
                f \\
              \end{array}
            \right)$
is equal to $q^{(2n-k)n}$, where $k$ is the rank of $\left(
    \begin{array}{cc}
      a_1-a_2 & e_1-e_2 \\
    \end{array}
  \right)$, since each column of $\left(
              \begin{array}{c}
                b \\
                f \\
              \end{array}
            \right)$ has $q^{2n-k}$ choices.

If $k=\mathrm{rank} \left(
    \begin{array}{cc}
      a_1-a_2 & e_1-e_2 \\
    \end{array}
  \right)=0$, then $c_1-c_2$ must be $0$ to guarantee that
  $$
 \mathrm{rank} \left(
    \begin{array}{cc}
      a_1-a_2 & e_1-e_2 \\
    \end{array}
  \right)=
  \mathrm{rank}\left(
    \begin{array}{ccc}
      a_1-a_2 & e_1-e_2&c_1-c_2 \\
    \end{array}
  \right).
$$
Then $a_1=a_2, e_1=e_2, c_1=c_2$, which contradicts that $(a_1, e_1, c_1)$ and $(a_2, e_2, c_2)$ are different. So equation (\ref{8}) has no solution if $k=\mathrm{rank} \left(
    \begin{array}{cc}
      a_1-a_2 & e_1-e_2 \\
    \end{array}
  \right)=0$.

For $1\leq k\leq n$, let $E_{k}$ be the adjacency matrix of the graph $G_{k}$, whose vertex set is $(M_n(\mathbb{F}_q))^3$, such that two vertices $(a_1, e_1, c_1)$ and $(a_2, e_2, c_2)$ form an edge if  and only if
$$\mathrm{rank} \left(
    \begin{array}{cc}
      a_1-a_2 & e_1-e_2 \\
    \end{array}
  \right)=
  \mathrm{rank}\left(
    \begin{array}{ccc}
      a_1-a_2 & e_1-e_2&c_1-c_2 \\
    \end{array}
  \right)=k.
$$
If $(0,0, 0)$ is adjacent with $(a, e, c)$, then $(a', e', c')$ is adjacent with $(a+a', e+e', c+c')$, and vice versa. So $G_{k}$ is regular. We count the degree of $(0,0, 0)$, i.e., the number of $(a, e, c)$ with the property that
$$
\mathrm{rank} \left(
    \begin{array}{cc}
      a & e \\
    \end{array}
  \right)=
  \mathrm{rank}\left(
    \begin{array}{ccc}
      a & e&c \\
    \end{array}
  \right)=k.
$$
We first choose $\left(
    \begin{array}{cc}
      a & e \\
    \end{array}
  \right)$
such that $\mathrm{rank} \left(
    \begin{array}{cc}
      a & e \\
    \end{array}
  \right)=k$
and we need the following theorem.
\begin{theorem}[\cite{Landsberg1893}]\label{rank}
The number of matrices of size $m\times n$ and with rank $k$ over $\mathbb{F}_q$ is $\frac{Q_{k}(q^m)Q_k(q^n)}{Q_k(q^k)}$, where $Q_k(q^m)=(q^m-1)(q^m-q)\cdots(q^m-q^{k-1})$.
\end{theorem}
Since $\left(
    \begin{array}{cc}
      a & e \\
    \end{array}
  \right)$ is an $n\times 2n$ matrix, Theorem \ref{rank} implies that there are $\frac{Q_{k}(q^{2n})Q_{k}(q^n)}{Q_{k}(q^{k})}$ choices for $\left(
    \begin{array}{cc}
      a & e \\
    \end{array}
  \right)$. Next we choose $c$. Since $
\mathrm{rank} \left(
    \begin{array}{cc}
      a & e \\
    \end{array}
  \right)=
  \mathrm{rank}\left(
    \begin{array}{ccc}
      a & e&c \\
    \end{array}
  \right)=k,
$
it follows that every column of $c$ is in the column space of $\left(
    \begin{array}{cc}
      a & e \\
    \end{array}
  \right)$, and hence every column of $c$ has $q^k$ choices. So the number of $(a, e, c)$ with the property that
$$
\mathrm{rank} \left(
    \begin{array}{cc}
      a & e \\
    \end{array}
  \right)=
  \mathrm{rank}\left(
    \begin{array}{ccc}
      a & e&c \\
    \end{array}
  \right)=k
$$
is
$$
\frac{Q_{k}(q^{2n})Q_{k}(q^n)}{Q_{k}(q^{k})} q^{nk}\sim q^{4nk-k^2}.
$$

For $0\leq k\leq n-1$, let $F_{k}$ be the adjacency matrix of the graph $H_{k}$, whose vertex set is $(M_n(\mathbb{F}_q))^3$, such that two vertices $(a_1, e_1, c_1)$ and $(a_2, e_2, c_2)$ form an edge in $H_{k}$  if  and only if
$$\mathrm{rank} \left(
    \begin{array}{cc}
      a_1-a_2 & e_1-e_2 \\
    \end{array}
  \right)=k<
  \mathrm{rank}\left(
    \begin{array}{ccc}
      a_1-a_2 & e_1-e_2&c_1-c_2 \\
    \end{array}
  \right).
$$
If $(0,0, 0)$ is adjacent with $(a, e, c)$, then $(a', e', c')$ is adjacent with $(a+a', e+e', c+c')$, and vice versa. So $H_{k}$ is regular. We count the degree of $(0,0, 0)$, i.e., the number of $(a, e, c)$ with the property that
$$
\mathrm{rank} \left(
    \begin{array}{cc}
      a & e \\
    \end{array}
  \right)=k<
  \mathrm{rank}\left(
    \begin{array}{ccc}
      a & e&c \\
    \end{array}
  \right).
$$
We first choose $\left(
    \begin{array}{cc}
      a & e \\
    \end{array}
  \right)$
such that $\mathrm{rank} \left(
    \begin{array}{cc}
      a & e \\
    \end{array}
  \right)=k$. There are $\frac{Q_{k}(q^{2n})Q_{k}(q^n)}{Q_{k}(q^{k})}$ choices for $\left(
    \begin{array}{cc}
      a & e \\
    \end{array}
  \right)$. Next we choose $c$. The number of choices for $c$ such that $$
\mathrm{rank} \left(
    \begin{array}{cc}
      a & e \\
    \end{array}
  \right)=
  \mathrm{rank}\left(
    \begin{array}{ccc}
      a & e&c \\
    \end{array}
  \right)=k
$$
is $q^{nk}$, so the number of choices for $c$ such that $$
\mathrm{rank} \left(
    \begin{array}{cc}
      a & e \\
    \end{array}
  \right)=k<
  \mathrm{rank}\left(
    \begin{array}{ccc}
      a & e&c \\
    \end{array}
  \right)
$$
is $q^{n^2}-q^{nk}$.
Hence the number of $(a, e, c)$ with the property that
$$
\mathrm{rank} \left(
    \begin{array}{cc}
      a & e \\
    \end{array}
  \right)=k<
  \mathrm{rank}\left(
    \begin{array}{ccc}
      a & e&c \\
    \end{array}
  \right)
$$
is
$$
\frac{Q_{k}(q^{2n})Q_{k}(q^n)}{Q_{k}(q^{k})} (q^{n^2}-q^{nk})\sim q^{n^2+3nk-k^2}.
$$

Based on the previous calculation, we have
\begin{equation}\label{9}
\begin{split}
  NN^T  =& q^{n^2}J+(\deg(U)-q^{n^2})I+\sum_{k=1}^{n}(q^{2n^2-nk}-q^{n^2})E_{k}-\sum_{k=0}^{n-1}q^{n^2}F_{k}\\
  =&q^{n^2}J+(\deg(U)-q^{n^2})I+\sum_{k=1}^{n-1}(q^{2n^2-nk}-q^{n^2})E_{k}-\sum_{k=0}^{n-1}q^{n^2}F_{k},
\end{split}
\end{equation}
where $I$ is the identity matrix.

Let $v^3 = (u_1, \ldots , u_{|U|}, v_1, \ldots , v_{|V|})^T$ be an eigenvector of $A_G$ corresponding to the eigenvalue $\lambda_3$. Lemma \ref{NNT} implies that $(u_1, \ldots , u_{|U|})^T$ is an eigenvector of $NN^T$ corresponding to the eigenvalue $\lambda_3^2$. It follows from equation (\ref{9}) that
\begin{equation}\label{10}
  (\lambda_3^2-\deg(U)+q^{n^2})(u_1, \ldots , u_{|U|})^T  =  \left(\sum_{k=1}^{n-1}(q^{2n^2-nk}-q^{n^2})E_{k}-\sum_{k=0}^{n-1}q^{n^2}F_{k}\right)(u_1, \ldots , u_{|U|})^T.
\end{equation}
Therefore, $(u_1, \ldots , u_{|U|})^T$ is an eigenvector of
$$
\sum_{k=1}^{n-1}(q^{2n^2-nk}-q^{n^2})E_{k}-\sum_{k=0}^{n-1}q^{n^2}F_{k}
$$
corresponding to the eigenvalue $\lambda_3^2-\deg(U)+q^{n^2}$.

Since $G_{k}$ is regular, for every eigenvalue $\lambda$ of $E_{k}$, we have $|\lambda|\lesssim q^{4nk-k^2}$. Since $H_k$ is regular, for every eigenvalue $\lambda$ of $F_{k}$, we have $|\lambda|\lesssim q^{n^2+3nk-k^2}$. So if $\lambda$ is an eigenvalue of
$$
\sum_{k=1}^{n-1}(q^{2n^2-nk}-q^{n^2})E_{k}-\sum_{k=0}^{n-1}q^{n^2}F_{k},
$$
then
\begin{equation}
\begin{split}
|\lambda|&\lesssim\sum_{k=1}^{n-1}(q^{2n^2-nk}-q^{n^2})q^{4nk-k^2}+\sum_{k=0}^{n-1}q^{n^2}q^{n^2+3nk-k^2}\\
&\leq\sum_{k=1}^{n-1}q^{2n^2-nk}q^{4nk-k^2}+\sum_{k=0}^{n-1}q^{n^2}q^{n^2+3nk-k^2}\\
&\lesssim\sum_{k=0}^{n-1}q^{2n^2+3nk-k^2}.
\end{split}
\end{equation}

Observe that the function $f(k)=2n^2+3nk-k^2$ is increasing for $k\leq3n/2$, so the maximum occurs at $k=n-1$ and $2n^2+3nk-k^2\leq2n^2+3n(n-1)-(n-1)^2=4n^2-n-1$.
Therefore, the eigenvalue $\lambda_3^2-\deg(U)+q^{n^2}$ of
$$
\sum_{k=1}^{n-1}(q^{2n^2-nk}-q^{n^2})E_{k}-\sum_{k=0}^{n-1}q^{n^2}F_{k}
$$
satisfies that
$$
|\lambda_3^2-\deg(U)+q^{n^2}|\lesssim q^{4n^2-n-1}.
$$
Note that $\deg(U)=q^{2n^2}$. So we conclude that
$$
|\lambda_3|\lesssim q^{2n^2-\frac{n+1}{2}}.
$$

Now if $A, B, C, D, E, F\subseteq M_n(\mathbb{F}_q)$, then we can view $A\times E\times C$ as a subset of $U$ and $B\times F\times D$ as a subset of $V$, and $N(A, B, C, D, E, F)$ is equal to $e(A\times E\times C, B\times F\times D)$. So Lemma \ref{expander} shows that
$$
\begin{array}{lll}
N(A, B, C, D, E, F)&\leq&{\frac{\deg(U)}{|V|}|A\times E\times C||B\times F\times D|}+|\lambda_3|\sqrt{|A\times E\times C||B\times F\times D|}\\
&\leq&C(n)\left(\frac{|A||B||C||D||E||F|}{q^{n^2}}+q^{2n^2-\frac{n+1}{2}}\sqrt{|A||B||C||D||E||F|}\right).
\end{array}
$$

\end{proof}

\section{Proofs of Theorem \ref{A+BC2} and Theorem \ref{A+AAA2}}
In this section, we prove Theorem \ref{A+BC2} and Theorem \ref{A+AAA2}. We first prove Theorem \ref{A+BC2}. For convenience, we restate it here.
\begin{theorem}\label{A+BC3}
For $A, B, C\subseteq M_n(\mathbb{F}_q)$, we have
$$
|A+BC|\gtrsim\min\left\{q^{n^2}, \frac{|A||B||C|}{q^{2n^2-\frac{n+1}{2}}}\right\}.
$$
\end{theorem}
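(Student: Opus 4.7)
The strategy is to control $|A+BC|$ via Cauchy--Schwarz on the representation function of elements of $A+BC$, and then invoke Proposition~\ref{main2} to bound the resulting second moment. Let $r(x)$ denote the number of triples $(a,b,c) \in A \times B \times C$ with $a+bc = x$. Then $\sum_{x \in A+BC} r(x) = |A||B||C|$, so Cauchy--Schwarz gives
$$|A+BC| \;\geq\; \frac{(|A||B||C|)^2}{\sum_{x} r(x)^2}.$$
It therefore suffices to bound $\sum_x r(x)^2$ from above. This sum counts the number of $6$-tuples $(a_1, b_1, c_1, a_2, b_2, c_2) \in (A \times B \times C)^2$ satisfying $a_1 + b_1 c_1 = a_2 + b_2 c_2$.

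The key observation is that this equation rewrites as $b_1 c_1 + (-b_2) c_2 = a_2 + (-a_1)$, which matches the shape $ab + ef = c + d$ studied in Proposition~\ref{main2}, with the six ambient sets identified as $(B, C, A, -A, -B, C)$. Since $|-A| = |A|$ and $|-B| = |B|$, Proposition~\ref{main2} yields
$$\sum_x r(x)^2 \;\lesssim\; \frac{|A|^2|B|^2|C|^2}{q^{n^2}} + q^{2n^2-\frac{n+1}{2}}\,|A||B||C|.$$
Substituting this into the Cauchy--Schwarz bound and splitting according to which of the two terms in the denominator dominates gives $|A+BC| \gtrsim q^{n^2}$ in the first case and $|A+BC| \gtrsim |A||B||C|/q^{2n^2-(n+1)/2}$ in the second, which together yield the desired minimum.

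Because the analytic core of the argument has already been carried out in Proposition~\ref{main2}, no genuine obstacle remains; the proof is essentially a bookkeeping exercise. The one point that requires care is the identification of sets, in particular the sign flips that convert the two-fold equation $a_1 + b_1 c_1 = a_2 + b_2 c_2$ into the symmetric six-variable form $ab + ef = c + d$ required by the key lemma. Everything else is a routine \emph{second-moment / Cauchy--Schwarz} calculation, and the final minimum arises naturally from the two terms produced by the expander mixing estimate.
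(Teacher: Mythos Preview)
Your proof is correct and follows essentially the same approach as the paper: Cauchy--Schwarz on the representation function, identification of the second moment with $N(B,C,A,-A,-B,C)$, and an application of Proposition~\ref{main2}. The identification of sets and the final case split match the paper's argument exactly.
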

\begin{proof}
For $\lambda\in A+BC$, let
$$
t(\lambda)=|\{(a, b, c)\in A\times B\times C:a+bc=\lambda\}|.
$$
By the Cauchy-Schwarz inequality, we have
$$
(|A||B||C|)^2=\left(\sum_{\lambda\in A+BC}t(\lambda)\right)^2\leq|A+BC|\sum_{\lambda\in A+BC}t(\lambda)^2.
$$
Note that
$$
\sum_{\lambda\in A+BC}t(\lambda)^2=N(B, C, A, -A, -B, C).
$$
Proposition \ref{main2} implies that
$$
\frac{(|A||B||C|)^2}{|A+BC|}\leq N(B, C, A, -A, -B, C)\lesssim\frac{|A|^2|B|^2|C|^2}{q^{n^2}}+q^{2n^2-\frac{n+1}{2}}|A||B||C|.
$$
So
$$
\frac{(|A||B||C|)^2}{|A+BC|}\lesssim\frac{|A|^2|B|^2|C|^2}{q^{n^2}}
$$
or
$$
\frac{(|A||B||C|)^2}{|A+BC|}\lesssim q^{2n^2-\frac{n+1}{2}}|A||B||C|.
$$
We conclude that
$$
|A+BC|\gtrsim\min\left\{q^{n^2}, \frac{|A||B||C|}{q^{2n^2-\frac{n+1}{2}}}\right\}.
$$
\end{proof}

Before proving Theorem \ref{A+AAA2}, we need an estimate of additive energy.

For $A, B\subseteq M_n(\mathbb{F}_q)$, define
$$
E_+(A, B)  =|\{(a_1, a_2, b_1, b_2)\in A^2\times B^2:a_1+b_1=a_2+b_2\}|.
$$
\begin{lemma}\label{additiveenergy}
Let $A, B\subseteq M_n(\mathbb{F}_q)$ and $C\subseteq GL_n{\mathbb{F}_q}$. We have
$$
E_+(A, B)\lesssim\frac{|BC|^2|A|^2}{q^{n^2}}+q^{2n^2-\frac{n+1}{2}}\frac{|BC||A|}{|C|}.
$$
\end{lemma}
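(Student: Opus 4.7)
The plan is to reduce $E_+(A,B)$ to a configuration handled by Proposition~\ref{main2}, exploiting the invertibility of elements of $C$. Since every $c\in C\subseteq GL_n(\mathbb{F}_q)$ is invertible, the equation $a_1+b_1=a_2+b_2$ is equivalent, after right-multiplication by $c$, to $a_1c+b_1c=a_2c+b_2c$. Writing $\beta_i:=b_ic\in Bc\subseteq BC$ and using the injectivity of $(b_1,b_2,c)\mapsto(\beta_1,\beta_2,c)$, summing over $c\in C$ yields
$$
|C|\cdot E_+(A,B)\le T:=\bigl|\{(a_1,a_2,\beta_1,\beta_2,c)\in A^2\times(BC)^2\times C:\ a_1c+\beta_1=a_2c+\beta_2\}\bigr|.
$$

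The equation defining $T$ rewrites as $a_1\cdot c+(-a_2)\cdot c=\beta_2+(-\beta_1)$, matching the form $ab+ef=c+d$ of Proposition~\ref{main2} once the two occurrences of $c$ are decoupled into independent variables $c,c'$. The resulting 6-variable count
$$
\widetilde N:=\bigl|\{(a_1,a_2,\beta_1,\beta_2,c,c')\in A^2\times(BC)^2\times C^2:\ a_1c+\beta_1=a_2c'+\beta_2\}\bigr|
$$
is, by Proposition~\ref{main2} applied with the six sets $A,C,-A,C,BC,-BC$, bounded by
$$
\widetilde N\lesssim \frac{|A|^2|BC|^2|C|^2}{q^{n^2}}+q^{2n^2-\frac{n+1}{2}}\,|A|\,|BC|\,|C|.
$$

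The remaining task is to pass from $\widetilde N$ back to the diagonal count $T$ in a way that gains a factor of $|C|$. Writing $T=\sum_c M_{c,c}$ and $\widetilde N=\sum_{c,c'}M_{c,c'}$, where $M_{c,c'}$ counts the quadruples $(a_1,a_2,\beta_1,\beta_2)$ satisfying $a_1c+\beta_1=a_2c'+\beta_2$, a Cauchy--Schwarz argument combined with a second-moment bound on $\sum_c M_{c,c}^2$ --- obtained by pairing two copies of the defining equation and invoking Proposition~\ref{main2} a second time in a manner analogous to its own spectral proof --- aims to show that $T\lesssim \widetilde N/|C|$. Combined with the earlier inequality $|C|\cdot E_+(A,B)\le T$ and division by $|C|$, this would yield the claimed bound. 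The main obstacle is precisely this final step: extracting the factor of $|C|$ from the diagonal count $T$ within $\widetilde N$ requires reducing a paired 9-variable, 2-equation count back to a form handled by Proposition~\ref{main2} while preserving the $1/|C|$ saving, which is where the invertibility of elements of $C$ and the equidistribution of the $GL_n(\mathbb{F}_q)$-action on matrix equations play an essential role.
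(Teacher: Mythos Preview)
Your reduction has a genuine gap at exactly the point you flag as the ``main obstacle.'' After obtaining $|C|\cdot E_+(A,B)\le T$ and bounding the decoupled count $\widetilde N$ via Proposition~\ref{main2}, you need $T\lesssim \widetilde N/|C|$, i.e.\ that the diagonal $c=c'$ carries at most an average share of $\widetilde N$. There is no reason this should hold in general: the diagonal slice $M_{c,c}$ counts solutions to $(a_1-a_2)c=\beta_2-\beta_1$, and nothing prevents these from being far more numerous than the off-diagonal $M_{c,c'}$. The Cauchy--Schwarz/second-moment sketch you give does not recover a $1/|C|$ saving, and reducing the paired $9$-variable system back to Proposition~\ref{main2} with the required gain is not something the proposition supplies.

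The paper sidesteps this entirely by introducing \emph{two independent} elements $c_1,c_2\in C$ from the outset, via the identity $b_i=(b_ic_i)c_i^{-1}$. This rewrites $a_1+b_1=a_2+b_2$ as
\[
a_1+s_1t_1=a_2+s_2t_2,\qquad s_i\in BC,\ t_i\in C^{-1},
\]
which is already a six-variable equation of the exact shape $ab+ef=c+d$. Since the map $(b_i,c_i)\mapsto(b_ic_i,c_i^{-1})$ is injective, one gets directly
\[
E_+(A,B)\le |C|^{-2}\,N(BC,C^{-1},A,-A,BC,C^{-1}),
\]
and a single application of Proposition~\ref{main2} finishes the proof. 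The point is that the factor $|C|^{-2}$ is present \emph{before} any decoupling, so no diagonal-versus-average comparison is needed. Your approach, by contrast, inserts only one $c$ and then tries to manufacture a second independent copy after the fact; that is where the argument breaks down.
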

\begin{proof}
By definition, we have
\begin{equation}
  \begin{split}
    E_+(A, B) & =|\{(a_1, a_2, b_1, b_2)\in A^2\times B^2:a_1+b_1=a_2+b_2\}| \\
      & =|C|^{-2}|\{(a_1, a_2, b_1, b_2, c_1, c_2)\in A^2\times B^2\times C^2:a_1+b_1c_1c_1^{-1}=a_2+b_2c_2c_2^{-1}\}|\\
      &\leq|C|^{-2}|\{(a_1, a_2, s_1, s_2, t_1, t_2)\in A^2\times (BC)^2\times (C^{-1})^2:a_1+s_1t_1=a_2+s_2t_2\}|\\
      &=|C|^{-2}N(BC, C^{-1}, A, -A, BC, C^{-1}).
  \end{split}
\end{equation}
It follows from Proposition \ref{main2} that
\begin{equation}
  \begin{split}
    E_+(A, B) &\leq|C|^{-2}N(BC, C^{-1}, A, -A, BC, C^{-1})\\
    &\lesssim|C|^{-2}\left(\frac{|BC|^2|C|^2|A|^2}{q^{n^2}}+q^{2n^2-\frac{n+1}{2}}|BC||C||A|\right)\\
    &=\frac{|BC|^2|A|^2}{q^{n^2}}+q^{2n^2-\frac{n+1}{2}}\frac{|BC||A|}{|C|}.
  \end{split}
\end{equation}
\end{proof}
For $\lambda\in A+B$, define
$$
t_{A+B}(\lambda)=|\{(a, b)\in A\times B:a+b=\lambda\}|.
$$
By the Cauchy-Schwarz inequality, we have
$$
(|A||B|)^2=(\sum_{\lambda\in A+B}t_{A+B}(\lambda))^2\leq|A+B|\sum_{\lambda\in A+B}t_{A+B}(\lambda)^2=|A+B|E_+(A, B).
$$

Now we are able to prove Theorem \ref{A+AAA2}. We also restate Theorem \ref{A+AAA2} here.
\begin{theorem}
For every positive integer $n$, there exists $C(n)$ such that the following holds. If $A\subseteq M_n(\mathbb{F}_q)$ with $|A|\geq C(n)q^{n^2-1}$, we have
$$
\max\{|A+A|, |AA|\}\gtrsim \min\left\{\frac{|A|^2}{q^{n^2-\frac{n+1}{4}}}, q^{n^2/3}|A|^{2/3}\right\}.
$$
\end{theorem}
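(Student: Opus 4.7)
The plan is to combine the additive energy estimate of Lemma \ref{additiveenergy} with the Cauchy--Schwarz bound $|A+A| \geq |A|^4/E_+(A,A)$, using the invertible part of $A$ itself as the set $C$ in Lemma \ref{additiveenergy}. This is the natural specialization: taking $C \subseteq A$ ensures that $|AC|$ is controlled by $|AA|$, while requiring $C \subseteq GL_n(\mathbb{F}_q)$ is exactly what the lemma needs in order to introduce products into an additive-energy bound via the trick $b_1 = b_1 c_1 c_1^{-1}$.

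First I would extract a large invertible subset of $A$. Since $|Z_n(\mathbb{F}_q)| \sim q^{n^2-1}$, choosing $C(n)$ larger than the implicit constant in this estimate forces $A' := A \cap GL_n(\mathbb{F}_q)$ to satisfy $|A'| \geq |A|/2$, and in particular $|A'| \sim |A|$. Applying Lemma \ref{additiveenergy} with $B = A$ and $C = A'$, and using $|AA'| \leq |AA|$ together with $|A'| \sim |A|$, I obtain
$$E_+(A,A) \lesssim \frac{|AA|^2 |A|^2}{q^{n^2}} + q^{2n^2 - \frac{n+1}{2}} |AA|.$$
The Cauchy--Schwarz inequality $|A+A| \cdot E_+(A,A) \geq |A|^4$, which is recorded just before the theorem, then yields
$$|A+A| \gtrsim \frac{|A|^4}{\dfrac{|AA|^2 |A|^2}{q^{n^2}} + q^{2n^2 - \frac{n+1}{2}} |AA|}.$$

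To finish, I would split into two cases according to which term in the denominator dominates. If the first term dominates, then $|A+A| \cdot |AA|^2 \gtrsim q^{n^2} |A|^2$, and hence $\max\{|A+A|, |AA|\}^3 \gtrsim q^{n^2} |A|^2$, giving $\max\{|A+A|, |AA|\} \gtrsim q^{n^2/3} |A|^{2/3}$. If the second term dominates, then $|A+A| \cdot |AA| \gtrsim |A|^4 / q^{2n^2 - \frac{n+1}{2}}$, whence $\max\{|A+A|, |AA|\} \gtrsim |A|^2 / q^{n^2 - \frac{n+1}{4}}$. Taking the minimum of the two bounds gives the theorem. The argument is largely mechanical once Lemma \ref{additiveenergy} is in hand; the only mildly delicate point, and the one motivating the hypothesis $|A| \geq C(n) q^{n^2-1}$, is the first step, where $C(n)$ must be chosen large enough to absorb the size of the singular set $Z_n(\mathbb{F}_q)$ so that $|A'| \sim |A|$.
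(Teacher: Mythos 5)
Your proposal is correct and matches the paper's argument essentially step for step: extract a large invertible subset using the hypothesis $|A| \geq C(n)q^{n^2-1}$, apply Lemma \ref{additiveenergy}, combine with the Cauchy--Schwarz bound $|A+A|\,E_+(A,A)\geq|A|^4$, and split into cases. The only cosmetic difference is that the paper replaces $A$ outright by $A\cap GL_n(\mathbb{F}_q)$ and takes $A=B=C$ in the lemma, whereas you keep the original $A$ in the roles of $A$ and $B$ and set only $C=A'$, then bound $|AA'|\leq|AA|$ and $|A'|\sim|A|$; both are equally valid.
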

\begin{proof}
Since $|A|\geq C(n)q^{n^2-1}$ and $|Z_n(\mathbb{F}_q)|\sim q^{n^2-1}$, we choose $C(n)$ such that $|A|>2|Z_n(\mathbb{F}_q)|$. Then $|A\cap GL_n(\mathbb{F}_q)|\geq |A|/2$. And hence we can assume that $A\subseteq GL_n(\mathbb{F}_q)$. Applying Lemma \ref{additiveenergy} with $A=B=C$, we have
\begin{equation}
  \begin{split}
    \frac{|A|^4}{|A+A|} & \leq E_+(A, A)\\
      & \lesssim \frac{|AA|^2|A|^2}{q^{n^2}}+q^{2n^2-\frac{n+1}{2}}|AA|.
  \end{split}
\end{equation}
Therefore
$$
\max\{|A+A|, |AA|\}\gtrsim \min\left\{\frac{|A|^2}{q^{n^2-\frac{n+1}{4}}}, q^{n^2/3}|A|^{2/3}\right\}.
$$

\end{proof}

Furthermore, we have another theorem, which also generalizes the result in  \cite{2021arXiv210607328M}.
\begin{theorem}
Let $A, B, C, D\subseteq M_n(\mathbb{F}_q)$, and let $N$ denote the number of solutions to the equation
$$
a+b=cd,\quad (a, b, c, d)\in A\times B\times C\times D.
$$
Then we have
$$
N\lesssim\frac{|A||B|^{\frac{1}{2}}|C||D|}{q^{\frac{n^2}{2}}}+q^{n^2-\frac{n+1}{4}}(|A||C||D||B|)^{\frac{1}{2}}.
$$
\end{theorem}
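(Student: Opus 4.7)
The plan is to mimic the proof of Theorem~\ref{A+BC3}: apply Cauchy--Schwarz in a single variable so that the resulting second moment is a six-variable count of the shape handled by Proposition~\ref{main2}. The choice of averaging variable is forced by the target bound, where $|B|$ carries exponent $1/2$ while $|A|$, $|C|$, $|D|$ carry exponent $1$ in the first term. This suggests writing $N=\sum_{b\in B}g(b)$ with
$$
g(b)=|\{(a,c,d)\in A\times C\times D : a+b=cd\}|,
$$
and then using $N\leq|B|^{1/2}\bigl(\sum_{b\in B}g(b)^2\bigr)^{1/2}$.

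Next I would expand $\sum_{b\in B}g(b)^2$ as the number of tuples $(a_1,c_1,d_1,a_2,c_2,d_2,b)\in A^2\times C^2\times D^2\times B$ with $a_i+b=c_id_i$ for $i=1,2$. For each choice of the six non-$b$ coordinates there is at most one compatible $b$, and the compatibility condition $c_1d_1-a_1=c_2d_2-a_2$ rearranges to
$$
c_1 d_1 + (-c_2)d_2 = a_1 + (-a_2),
$$
which fits the template $ab+ef=c+d$ of Proposition~\ref{main2} via the correspondence $(a,b,c,d,e,f)\leftrightarrow(c_1,d_1,a_1,-a_2,-c_2,d_2)$. Thus $\sum_{b\in B}g(b)^2\leq N(C,D,A,-A,-C,D)$, and Proposition~\ref{main2} (using $|-A|=|A|$ and $|-C|=|C|$) gives
$$
\sum_{b\in B}g(b)^2\lesssim\frac{|A|^2|C|^2|D|^2}{q^{n^2}}+q^{2n^2-\frac{n+1}{2}}|A||C||D|.
$$

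Taking square roots, using $\sqrt{x+y}\leq\sqrt{x}+\sqrt{y}$, and multiplying by $|B|^{1/2}$ then yields exactly
$$
N\lesssim\frac{|A||B|^{1/2}|C||D|}{q^{n^2/2}}+q^{n^2-\frac{n+1}{4}}(|A||B||C||D|)^{1/2}.
$$
There is no serious obstacle here: all of the spectral work is already contained in Proposition~\ref{main2}, so the proof reduces to a short Cauchy--Schwarz argument. The only points requiring care are identifying the correct averaging variable from the shape of the target inequality, and verifying that the Cauchy--Schwarz compatibility equation really does match the $ab+ef=c+d$ template once the signs are absorbed into $-A$ and $-C$.
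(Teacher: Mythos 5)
Your proposal is correct and matches the paper's own proof essentially step for step: both average over $b$, apply Cauchy--Schwarz, recognize the second moment as $N(C,D,A,-A,-C,D)$, and finish with Proposition~\ref{main2}. The only cosmetic difference is that the paper defines $r(b)$ via $-a+cd=b$ while you write $a+b=cd$, which is the same condition.
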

\begin{proof}
For every $b\in B$, let
$$
r(b)=|\{(a, c, d)\in A\times C\times D:-a+cd=b\}|.
$$
By definition, we have $N=\sum_{b\in B}r(b)$. The Cauchy-Schwarz inequality implies that
$$
N^2=\left(\sum_{b\in B}r(b)\right)^2\leq|B|\sum_{b\in B}r(b)^2.
$$
Note that
\begin{equation*}
\begin{split}
  \sum_{b\in B}r(b)^2 & = |\{(a_1, c_1, d_1,a_2, c_2, d_2)\in A\times C\times D\times A\times C\times D:-a_1+c_1d_1=-a_2+c_2d_2\in B\}|\\
    & \leq|\{(a_1, c_1, d_1,a_2, c_2, d_2)\in A\times C\times D\times A\times C\times D:-a_1+c_1d_1=-a_2+c_2d_2\}|\\
    &=N(C, D, A, -A, -C, D)\\
    &\lesssim \frac{|A|^2|C|^2|D|^2}{q^{n^2}}+q^{2n^2-\frac{n+1}{2}}|A||C||D|.
\end{split}
\end{equation*}
So
$$
N\lesssim\frac{|A||B|^{\frac{1}{2}}|C||D|}{q^{\frac{n^2}{2}}}+q^{n^2-\frac{n+1}{4}}(|A||C||D||B|)^{\frac{1}{2}}.
$$

\end{proof}

\section{Proof of Theorem \ref{(A+B)C}}
In this section, we give another proof of Theorem \ref{(A+B)C}.

We construct a graph $G'=(U'\cup V', E')$, where $U'=V'=(M_n(\mathbb{F}_q))^3$. There is an edge between $(a, e, c)\in U$ and $(b, f, d)\in V$ if and only if $ba+ef=c+d$. The only difference here compared to the graph in Section \ref{keylemma} is that we switch between $ba$ and $ab$. We still have $$
|U'|=|V'|=(|M_n(\mathbb{F}_q)|)^3=q^{3n^2},\ \deg(U')=q^{2n^2},\ \text{ and }\frac{\deg(U')}{|V'|}=\frac{1}{q^{n^2}}.
$$

For any two points $(a_1, e_1, c_1)$ and $(a_2, e_2, c_2)$ in $U'$, we count the number of their common neighbors, i.e., the number of solutions $(b, f, d)$ to the equations
\begin{equation}\label{11}
  ba_1+e_1f=c_1+d, \quad ba_2+e_2f=c_2+d.
\end{equation}
So we have
\begin{equation}\label{12}
b(a_1-a_2)+(e_1-e_2)f=c_1-c_2.
\end{equation}
A solution $(b, f)$ to equation (\ref{12}) corresponds to a solution $(b, f, ba_1+e_1f-c_1)$ to equations (\ref{11}). So we only need to determine the number of solutions to equation (\ref{12}).

Let $k_1=\mathrm{rank}(e_1-e_2)$ and $k_2=\mathrm{rank}(a_1-a_2)$.
Then there exist $P_1, Q_1, P_2, Q_2\in GL_n(\mathbb{F}_q)$, such that $P_1(e_1-e_2)Q_1=
\left(
\begin{array}{cc}
I_{k_1} & 0 \\
0 & 0 \\
\end{array}
\right)
$, and $P_2(a_1-a_2)Q_2=\left(
\begin{array}{cc}
I_{k_2} & 0 \\
0 & 0 \\
\end{array}
\right)$.
Equation (\ref{12}) becomes
\begin{equation}\label{13}
  P_1bP_2^{-1}P_2(a_1-a_2)Q_2+P_1(e_1-e_2)Q_1Q_1^{-1}fQ_2=P_1(c_1-c_2)Q_2,
\end{equation}
i.e.,
\begin{equation}\label{14}
  P_1bP_2^{-1}\left(
  \begin{array}{cc}
  I_{k_2} & 0 \\
  0 & 0 \\
  \end{array}
  \right)+\left(
  \begin{array}{cc}
  I_{k_1} & 0 \\
  0 & 0 \\
  \end{array}\right)Q_1^{-1}fQ_2=P_1(c_1-c_2)Q_2.
\end{equation}
If we write $b'=P_1bP_2^{-1}$ and $f'=Q_1^{-1}fQ_2$, then a solution $(b, f)$ to equation (\ref{12}) corresponds to a solution $(b', f')$ to equation
\begin{equation}\label{15}
  b'\left(
  \begin{array}{cc}
  I_{k_2} & 0 \\
  0 & 0 \\
  \end{array}
  \right)+\left(
  \begin{array}{cc}
  I_{k_1} & 0 \\
  0 & 0 \\
  \end{array}\right)f'=P_1(c_1-c_2)Q_2.
\end{equation}
So the number of solutions $(b, f)$ to equation (\ref{12}) is equal to the number of solutions $(b', f')$ to equation (\ref{15}). If we write $b'=(b_{ij})_{1\leq i,j\leq n}$, $f'=(f_{ij})_{1\leq i,j\leq n}$, and $P_1(c_1-c_2)Q_2=(c_{ij})_{1\leq i,j\leq n}$, then equation (\ref{15}) becomes
\begin{equation}\label{16}
  \left\{
\begin{array}{lllll}
  b_{ij}+f_{ij}=c_{ij}, & \text{ for }1\leq i\leq k_1, \text{ and }1\leq j\leq k_2;\\
  f_{ij}=c_{ij}, & \text{ for }1\leq i\leq k_1, \text{ and }k_2+1\leq j\leq n;\\
  b_{ij}=c_{ij},&\text{ for }k_1+1\leq i\leq n, \text{ and }1\leq j\leq k_2;\\
  c_{ij}=0, & \text{ for }k_1+1\leq i\leq n, \text{ and }k_2+1\leq j\leq n.
\end{array}
  \right.
\end{equation}
Therefore, equation (\ref{12}) has a solution if and only if $c_{ij}=0$ for $k_1+1\leq i\leq n$ and $k_2+1\leq j\leq n$.
And if equation (\ref{12}) has a solution, then it is not difficult to calculate that the number of solutions is $q^{2n^2-k_1n-k_2n+k_1k_2}$.

If $k_1=k_2=0$, then $c_{ij}=0$ for $1\leq i\leq n$ and $1\leq j\leq n$, i.e., $P_1(c_1-c_2)Q_2=0$. It follows that $a_1=a_2, e_1=e_2, c_1=c_2$, which contradicts that $(a_1, e_1, c_1)$ and $(a_2, e_2, c_2)$ are different. So equation (\ref{12}) has no solution if $k_1=k_2=0$. If either $k_1$ or $k_2$ is equal to $n$, without loss of generality, assuming that $k_1=n$, then equation (\ref{12}) always has a solution $(b, f)$ where $f=(e_1-e_2)^{-1}(c_1-c_2-b(a_1-a_2))$.

For $0\leq k_1, k_2\leq n$ (except the case that $k_1=k_2=0$), let $E_{k_1, k_2}$ be the adjacency matrix of the graph $G_{k_1, k_2}$, whose vertex set is $(M_n(\mathbb{F}_q))^3$ such that two vertices $(a_1, e_1, c_1)$ and $(a_2, e_2, c_2)$ form an edge in $G_{k_1, k_2}$ if and only if $\mathrm{rank}(e_1-e_2)=k_1$, $\mathrm{rank}(a_1-a_2)=k_2$, and equation (\ref{12}) has a solution. If $(0,0, 0)$ is adjacent with $(a, e, c)$, then $(a', e', c')$ is adjacent with $(a+a', e+e', c+c')$, and vice versa. So $G_{k_1, k_2}$ is regular. We count the degree of $(0,0, 0)$, i.e., the number of $(a, e, c)$ with the property that $\mathrm{rank}(e)=k_1$, $\mathrm{rank}(a)=k_2$, and $ba+ef=c$ has a solution.

We first choose $a$ and $e$ such that $\mathrm{rank}(e)=k_1$, $\mathrm{rank}(a)=k_2$. Theorem \ref{rank} implies that there are $\frac{Q_{k_2}(q^{n})Q_{k_2}(q^n)}{Q_{k_2}(q^{k_2})}\frac{Q_{k_1}(q^{n})Q_{k_1}(q^n)}{Q_{k_1}(q^{k_1})}$ choices for $(a, e)$. Next we choose $c$. Given $a$ and $e$ such that $\mathrm{rank}(e)=k_1$ and $\mathrm{rank}(a)=k_2$, there exist $P_1, Q_1, P_2, Q_2\in GL_n(\mathbb{F}_q)$, such that $P_1eQ_1=
\left(
\begin{array}{cc}
I_{k_1} & 0 \\
0 & 0 \\
\end{array}
\right)
$, and $P_2aQ_2=\left(
\begin{array}{cc}
I_{k_2} & 0 \\
0 & 0 \\
\end{array}
\right)$.
Equation (\ref{16}) implies that the only restrictions to $c$ are $(P_1cQ_2)_{ij}=0$ for $k_1+1\leq i\leq n$ and $k_2+1\leq j\leq n$. There are $q^{n^2-(n-k_1)(n-k_2)}=q^{nk_1+nk_2-k_1k_2}$ choices for $P_1cQ_2$. And hence there are $q^{nk_1+nk_2-k_1k_2}$ choices for $c$. Thus the number of $(a, e, c)$ with the property that $\mathrm{rank}(e)=k_1$, $\mathrm{rank}(a)=k_2$, and $ba+ef=c$ has a solution is
$$
\frac{Q_{k_2}(q^{n})Q_{k_2}(q^n)}{Q_{k_2}(q^{k_2})}\frac{Q_{k_1}(q^{n})Q_{k_1}(q^n)}{Q_{k_1}(q^{k_1})}q^{nk_1+nk_2-k_1k_2}\sim q^{3nk_1+3nk_2-k_1^2-k_2^2-k_1k_2}.
$$

For $0\leq k_1, k_2\leq n-1$, let $F_{k_1, k_2}$ be the adjacency matrix of the graph $H_{k_1, k_2}$, whose vertex set is $(M_n(\mathbb{F}_q))^3$, such that two vertices $(a_1, e_1, c_1)$ and $(a_2, e_2, c_2)$ form an edge in $H_{k_1, k_2}$  if  and only if $\mathrm{rank}(e_1-e_2)=k_1$, $\mathrm{rank}(a_1-a_2)=k_2$, and equation (\ref{12}) has no solution. If $(0,0, 0)$ is adjacent with $(a, e, c)$, then $(a', e', c')$ is adjacent with $(a+a', e+e', c+c')$, and vice versa. So $H_{k_1, k_2}$ is regular. We count the degree of $(0,0, 0)$, i.e., the number of $(a, e, c)$ with the property that $\mathrm{rank}(e)=k_1$, $\mathrm{rank}(a)=k_2$, and $ba+ef=c$ has no solution.

We first choose $a$ and $e$ such that $\mathrm{rank}(e)=k_1$, $\mathrm{rank}(a)=k_2$. Theorem \ref{rank} implies that there are $\frac{Q_{k_2}(q^{n})Q_{k_2}(q^n)}{Q_{k_2}(q^{k_2})}\frac{Q_{k_1}(q^{n})Q_{k_1}(q^n)}{Q_{k_1}(q^{k_1})}$ choices for $(a, e)$. Next we choose $c$. According to the above argument, there are $q^{n^2}-q^{nk_1+nk_2-k_1k_2}$ choices for $c$. Thus the number of $(a, e, c)$ with the property that $\mathrm{rank}(e)=k_1$, $\mathrm{rank}(a)=k_2$, and $ba+ef=c$ has no solution is
$$
\frac{Q_{k_2}(q^{n})Q_{k_2}(q^n)}{Q_{k_2}(q^{k_2})}\frac{Q_{k_1}(q^{n})Q_{k_1}(q^n)}{Q_{k_1}(q^{k_1})}(q^{n^2}-q^{nk_1+nk_2-k_1k_2})\sim q^{n^2+2nk_1+2nk_2-k_1^2-k_2^2}.
$$

Based on the previous calculation, we have
\begin{equation}\label{17}
\begin{split}
  NN^T  =& q^{n^2}J+(\deg(U)-q^{n^2})I+\sum_{k_2=1}^{n}(q^{2n^2-k_2n}-q^{n^2})E_{0, k_2}\\
  &+\sum_{k_2=0}^{n}\sum_{k_1=1}^{n}(q^{2n^2-k_1n-k_2n+k_1k_2}-q^{n^2})E_{k_1, k_2}-\sum_{k_1, k_2=0}^{n-1}q^{n^2}F_{k_1, k_2}\\
  =&q^{n^2}J+(\deg(U)-q^{n^2})I+\sum_{k_2=1}^{n-1}(q^{2n^2-k_2n}-q^{n^2})E_{0, k_2}\\
  &+\sum_{k_2=0}^{n-1}\sum_{k_1=1}^{n-1}(q^{2n^2-k_1n-k_2n+k_1k_2}-q^{n^2})E_{k_1, k_2}-\sum_{k_1, k_2=0}^{n-1}q^{n^2}F_{k_1, k_2}.
\end{split}
\end{equation}

Let $v^3 = (u_1, \ldots , u_{|U'|}, v_1, \ldots , v_{|V'|})^T$ be an eigenvector of $A_{G'}$ corresponding to the eigenvalue $\lambda_3$. Lemma \ref{NNT} implies that $(u_1, \ldots , u_{|U'|})^T$ is an eigenvector of $NN^T$ corresponding to the eigenvalue $\lambda_3^2$. It follows from equation (\ref{9}) that
\begin{equation}\label{18}
\begin{split}
    &(\lambda_3^2-\deg(U)+q^{n^2})(u_1, \ldots , u_{|U'|})^T \\
    =& \left(\sum_{k_2=1}^{n-1}(q^{2n^2-k_2n}-q^{n^2})E_{0, k_2}
  +\sum_{k_2=0}^{n-1}\sum_{k_1=1}^{n-1}(q^{2n^2-k_1n-k_2n+k_1k_2}-q^{n^2})E_{k_1, k_2}-\sum_{k_1, k_2=0}^{n-1}q^{n^2}F_{k_1, k_2}\right)(u_1, \ldots , u_{|U'|})^T.
\end{split}
\end{equation}
Therefore, $(u_1, \ldots , u_{|U'|})^T$ is an eigenvector of
$$
\sum_{k_2=1}^{n-1}(q^{2n^2-k_2n}-q^{n^2})E_{0, k_2}
  +\sum_{k_2=0}^{n-1}\sum_{k_1=1}^{n-1}(q^{2n^2-k_1n-k_2n+k_1k_2}-q^{n^2})E_{k_1, k_2}-\sum_{k_1, k_2=0}^{n-1}q^{n^2}F_{k_1, k_2}
$$
corresponding to the eigenvalue $\lambda_3^2-\deg(U')+q^{n^2}$.

Since $G_{k_1, k_2}$ is regular, for every eigenvalue $\lambda$ of $E_{k_1, k_2}$, we have $|\lambda|\lesssim q^{3nk_1+3nk_2-k_1^2-k_2^2-k_1k_2}$. Since $H_{k_1, k_2}$ is regular, for every eigenvalue $\lambda$ of $F_{k_1, k_2}$, we have $|\lambda|\lesssim q^{n^2+2nk_1+2nk_2-k_1^2-k_2^2}$. So if $\lambda$ is an eigenvalue of
$$
\sum_{k_2=1}^{n-1}(q^{2n^2-k_2n}-q^{n^2})E_{0, k_2}
  +\sum_{k_2=0}^{n-1}\sum_{k_1=1}^{n-1}(q^{2n^2-k_1n-k_2n+k_1k_2}-q^{n^2})E_{k_1, k_2}-\sum_{k_1, k_2=0}^{n-1}q^{n^2}F_{k_1, k_2},
$$
then
\begin{equation}
\begin{split}
|\lambda|\lesssim&\sum_{k_2=1}^{n-1}(q^{2n^2-k_2n}-q^{n^2})q^{3nk_2-k_2^2}
  +\sum_{k_2=0}^{n-1}\sum_{k_1=1}^{n-1}(q^{2n^2-k_1n-k_2n+k_1k_2}-q^{n^2})q^{3nk_1+3nk_2-k_1^2-k_2^2-k_1k_2}\\
 & +\sum_{k_1, k_2=0}^{n-1}q^{n^2}q^{n^2+2nk_1+2nk_2-k_1^2-k_2^2}\\
\leq&\sum_{k_2=1}^{n-1}q^{2n^2+2nk_2-k_2^2}
  +\sum_{k_2=0}^{n-1}\sum_{k_1=1}^{n-1}q^{2n^2+2nk_1+2nk_2-k_1^2-k_2^2}+\sum_{k_1, k_2=0}^{n-1}q^{2n^2+2nk_1+2nk_2-k_1^2-k_2^2}\\
\lesssim&\sum_{k_1, k_2=0}^{n-1}q^{2n^2+2nk_1+2nk_2-k_1^2-k_2^2}.
\end{split}
\end{equation}

Given $k_1$ and $n$, observe that the function $g(k_2)=2n^2+2nk_1+2nk_2-k_1^2-k_2^2$ is increasing for $k_2\leq n$, so the maximum occurs at $k_2=n-1$ and $2n^2+2nk_1+2nk_2-k_1^2-k_2^2\leq2n^2+2nk_1+2n(n-1)-k_1^2-(n-1)^2=3n^2+2nk_1-k_1^2-1$. Similarly, $3n^2+2nk_1-k_1^2-1$ attains maximum at $k_1=n-1$. Thus $3n^2+2nk_1-k_1^2-1\leq3n^2+2n(n-1)-(n-1)^2-1=4n^2-2$.
Therefore, the eigenvalue $\lambda_3^2-\deg(U')+q^{n^2}$ of
$$
\sum_{k_2=1}^{n-1}(q^{2n^2-k_2n}-q^{n^2})E_{0, k_2}
  +\sum_{k_2=0}^{n-1}\sum_{k_1=1}^{n-1}(q^{2n^2-k_1n-k_2n+k_1k_2}-q^{n^2})E_{k_1, k_2}-\sum_{k_1, k_2=0}^{n-1}q^{n^2}F_{k_1, k_2}
$$
satisfies that
$$
|\lambda_3^2-\deg(U')+q^{n^2}|\lesssim q^{4n^2-2}.
$$
Note that $\deg(U')=q^{2n^2}$. So we conclude that
$$
|\lambda_3|\lesssim q^{2n^2-1}.
$$

Now if $A, B\subseteq M_n(\mathbb{F}_q)$ and $C\subseteq GL_n(\mathbb{F}_q)$, then we put $X=\{(c_1, -b_2, -a_1c_1): a_1\in A, b_2\in B, c_1\in C\}\subseteq U'$ and $Y=\{(b_1, c_2, a_2c_2):a_2\in A, b_1\in B, c_2\in C\}\subseteq V'$. Since $C\subseteq GL_n(\mathbb{F}_q)$, we have $|X|=|Y|=|A||B||C|$. Note that the number of edges between $X$ and $Y$ is equal to
$$
|\{(a_1, b_1, c_1, a_2, b_2, c_2)\in A\times B\times C\times A\times B\times C:(a_1+b_1)c_1=(a_2+b_2)c_2\}|.
$$
Similar to the proof of Theorem \ref{A+BC3}, we have
\begin{equation*}
  \begin{split}
    \frac{|A|^2|B|^2|C|^2}{|(A+B)C|} & \leq|\{(a_1, b_1, c_1, a_2, b_2, c_2)\in A\times B\times C\times A\times B\times C:(a_1+b_1)c_1=(a_2+b_2)c_2\}| \\
      & =e(X, Y)\\
      &\lesssim\frac{\deg(U')}{|V'|}|X||Y|+|\lambda_3|\sqrt{|X||Y|}\\
      &\lesssim\frac{|A|^2|B|^2|C|^2}{q^{n^2}}+q^{2n^2-1}|A||B||C|.
  \end{split}
\end{equation*}
Therefore,
$$
|(A+B)C|\gtrsim\min\left\{q^{n^2}, \frac{|A||B||C|}{q^{2n^2-1}}\right\}.
$$

\bibliographystyle{abbrv}
\bibliography{REF}
\end{document}